\titleformat*{\section}{\large\bfseries}
\newtheorem{theorem}{Theorem}[section]
\newtheorem{lemma}[theorem]{Lemma}
\newtheorem{definition}[theorem]{Definition}
\newtheorem{example}[theorem]{Example}
\newtheorem{proposition}[theorem]{Proposition}
\newtheorem{remark}[theorem]{Remark}
\numberwithin{equation}{section}
\title{On coupled best proximity points and Ulam-Hyers stability}
\author{\large Anuradha Gupta and Manu Rohilla$^*$}
\date{}
\begin{document}
\maketitle
\begin{abstract}
For two nonempty, closed, bounded and convex subsets $A$ and $B$ of a uniformly convex Banach space $X$ consider a mapping $T:(A \times B) \cup (B \times A) \rightarrow A \cup B$ satisfying $T(A,B) \subset B$ and $T(B, A) \subset A$. In this paper the existence of a coupled best proximity point is established when $T$ is considered to be a p-cyclic contraction  mapping and a p-cyclic nonexpansive mapping. The Ulam-Hyers stability of the best proximity point problem is also studied. 

\textbf{Mathematics Subject Classification:} 47H10; 47H09; 41A65

\textbf{Keywords:} coupled best proximity point, p-cyclic contraction mapping, p-cyclic nonexpansive mapping, uniformly convex Banach space,  Ulam-Hyers stability. 
\end{abstract}   
\section{Introduction and Preliminaries}
Let $A$ and $B$ be two nonempty subsets of a Banach space $X$. Define
\begin{align*}
dist(A,B)&:= \inf \{ \Vert a- b\Vert : a \in A \thinspace \thinspace \mbox{and} \thinspace \thinspace b \in B \},\\
A_0&:= \{ a \in A: \Vert a-b \Vert =dist(A,B) \thinspace \thinspace \mbox{for some } \thinspace \thinspace b \in B \},\\
B_0&:= \{ b \in B: \Vert a-b \Vert=dist(A,B) \thinspace \thinspace \mbox{for some} \thinspace \thinspace a \in A \}.
\end{align*}
 Let $T:(A \times B ) \cup (B \times A) \rightarrow A \cup B$ be a mapping. Then $(x,y) \in A \times B$ is called a coupled best proximity pair of $T$ if it satisfies
\begin{align*}
\Vert x-T(x,y) \Vert&=dist(A,B),\\
\Vert y-T(y,x) \Vert&=dist(A,B).
\end{align*}

Interestingly, it is a generalization of the concept of coupled fixed point \cite{12} under the assumption $A \cap B = \phi$.  If we take $A \cap B \neq \phi$, then the notion of coupled best proximity point reduces to that of coupled fixed point. 

Given a Banach space $(X, \Vert. \Vert)$, define a norm on $X \times X$ by $\Vert (x,y) \Vert= \max \{\Vert x \Vert, \Vert y \Vert \}$. Recall that a Banach space $X$ is said to be uniformly convex if for every $0 < \epsilon \leq 2$ there exists $\delta >0$ such that for any $x,y \in X$ with $\Vert x \Vert=1= \Vert y \Vert$ and $\Vert x-y \Vert  \geq \epsilon$ we have $\Big\Vert \frac{x+y}{2} \Big\Vert \leq 1- \delta$. It is known that every uniformly convex Banach space is reflexive. 

Indeed, best approximation results ensures the existence of approximate solutions but the solutions may not be optimal. On the other hand, best proximity point results yield optimal approximate solutions. Several authors (see \cite{1,2,3,5,6,9,11,13}) have obtained best proximity points of  various contraction and nonexpansive mappings in Banach spaces and metric spaces. The following result by  Kirk et al. \cite{5} guarantees that $A_0$ and $B_0$ are nonempty:
\begin{lemma}\label{lemma3}
\emph{\cite[Lemma 3.2]{5}} Let $A$ and $B$ be two nonempty, closed and convex subsets of a reflexive Banach space $X$. Suppose that $A$ is bounded. Then $A_0$ and $B_0$ are nonempty. 
\end{lemma}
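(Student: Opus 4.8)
The plan is to produce a single pair $(a,b) \in A \times B$ realizing the distance, i.e.\ with $\|a-b\| = dist(A,B)$; such a pair simultaneously witnesses $a \in A_0$ and $b \in B_0$, so both sets are nonempty at once. Write $d := dist(A,B)$. By the definition of the infimum, I would first choose minimizing sequences $\{a_n\} \subset A$ and $\{b_n\} \subset B$ with $\|a_n - b_n\| \to d$ as $n \to \infty$.

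The first genuine observation is that both sequences are bounded. Indeed $\{a_n\}$ is bounded because $A$ is bounded, and then the estimate $\|b_n\| \leq \|b_n - a_n\| + \|a_n\|$ shows $\{b_n\}$ is bounded as well, since the convergent sequence $\{\|a_n - b_n\|\}$ is bounded. This is precisely the step where boundedness of $A$ (rather than of both sets) is used, and it is what lets the argument go through even when $B$ is unbounded.

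Since $X$ is reflexive, every bounded sequence admits a weakly convergent subsequence. Passing to a common subsequence, I may therefore assume $a_n \rightharpoonup a$ and $b_n \rightharpoonup b$ for some $a,b \in X$. Because $A$ and $B$ are closed and convex, they are weakly closed (by Mazur's theorem), whence $a \in A$ and $b \in B$. It then remains to identify $\|a-b\|$ with $d$: from $a_n - b_n \rightharpoonup a-b$ together with the weak lower semicontinuity of the norm I obtain $\|a-b\| \leq \liminf_{n\to\infty} \|a_n - b_n\| = d$, while the reverse inequality $\|a-b\| \geq d$ holds automatically since $a \in A$ and $b \in B$. Hence $\|a-b\| = d$, so $a \in A_0$ and $b \in B_0$, and both sets are nonempty.

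I expect the only delicate point to be the passage to the weak limit: one must (i) justify that the minimizing sequence in $B$ is bounded despite $B$ possibly being unbounded, and (ii) combine the weak closedness of closed convex sets with the weak lower semicontinuity of the norm to control the limit. Neither fact is deep, but together they are exactly where reflexivity and convexity enter the argument; everything else is routine bookkeeping with the infimum.
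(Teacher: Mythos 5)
Your proof is correct. Note that the paper does not actually prove this lemma---it is quoted without proof from Kirk--Reich--Veeramani \cite[Lemma 3.2]{5}---and your argument (minimizing sequences, boundedness of $\{b_n\}$ via the triangle inequality using only the boundedness of $A$, weak sequential compactness from reflexivity, weak closedness of closed convex sets, and weak lower semicontinuity of the norm) is exactly the standard argument underlying the cited result, so there is nothing to flag.
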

Eldred and Veeramani \cite{1} introduced the notion of cyclic contraction mappings and established the existence of a best proximity point of cyclic contraction mapping.

Let $A$ and $B$ be two nonempty subsets of a metric space $(X,d)$. A mapping $T: A \cup B \rightarrow A \cup B$ is a cyclic contraction mapping if it satisfies the following conditions:

(i) $T(A) \subset B $ and $T(B) \subset A$,

(ii) $d(Tx,Ty) \leq \lambda d(x,y)+( 1-\lambda)dist(A,B)$, for some $\lambda \in (0,1)$ and all $x \in A$, $y \in B$.
\begin{theorem} \emph{\cite[Theorem 3.10]{1}}
Let $A$ and $B$ be nonempty, closed and convex subsets of a uniformly convex Banach space $X$. Let $T: A \cup B \rightarrow A \cup B$ be a cyclic contraction mapping. Then $T$ has a unique best proximity point.
\end{theorem}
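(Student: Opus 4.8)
The plan is to fix a starting point, iterate $T$, and extract convergence from uniform convexity. Write $d := dist(A,B)$, pick $x_0 \in A$, and set $x_{n+1} := Tx_n$, so that $x_{2n} \in A$ and $x_{2n+1} \in B$ for every $n$. Putting $a_n := \Vert x_n - x_{n+1}\Vert$, condition (ii) applied to the pair $(x_n, x_{n+1})$ (one point in $A$, one in $B$) gives $a_{n+1} \le \lambda a_n + (1-\lambda)d$, whence $a_{n+1} - d \le \lambda(a_n - d)$ and therefore $a_n - d \le \lambda^n(a_0 - d)$. Since $a_n \ge d$ always, this yields $\lim_{n} \Vert x_n - x_{n+1}\Vert = d$. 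A short separate estimate, iterating (ii) twice, shows that $e_n := \Vert x_{2n} - x_1\Vert$ obeys a recursion $e_n \le \lambda^2 e_{n-1} + C$ with $\lambda^2 < 1$, so the even iterates $\{x_{2n}\}$ (and, symmetrically, $\{x_{2n+1}\}$) are bounded; this boundedness is what feeds the estimates below, and it is self-contained, so the theorem's hypotheses suffice.

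Next I would invoke the two standard consequences of uniform convexity, to be recorded as preliminary lemmas. First, if $\{u_n\},\{v_n\} \subset A$ and $\{w_n\} \subset B$ satisfy $\Vert u_n - w_n\Vert \to d$ and $\Vert v_n - w_n\Vert \to d$, then $\Vert u_n - v_n\Vert \to 0$. Second, the Cauchy-producing lemma: if $\{u_n\} \subset A$, $\{w_n\} \subset B$ satisfy $\Vert u_n - w_n\Vert \to d$ and, for every $\epsilon>0$, there is $N_0$ with $\Vert u_m - w_n\Vert \le d + \epsilon$ for all $m > n \ge N_0$, then $\{u_n\}$ is Cauchy. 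The crux of the argument, and the step I expect to be the main obstacle, is verifying the hypothesis of this second lemma for the even iterates. For $m > n$ I would peel off $2n$ applications of (ii) along the pair $(x_{2m}, x_{2n+1})$, noting that the two indices always have opposite parity so (ii) applies at every step, obtaining
\[
\Vert x_{2m} - x_{2n+1}\Vert \le \lambda^{2n}\Vert x_{2m-2n} - x_1\Vert + (1-\lambda^{2n})d \le d + \lambda^{2n}(M - d),
\]
where $M$ bounds $\Vert x_{2j} - x_1\Vert$. Choosing $n$ large makes $\lambda^{2n}(M-d) \le \epsilon$, so the hypothesis holds and $\{x_{2n}\}$ is Cauchy; by completeness of $X$ and closedness of $A$ it converges to some $x \in A$. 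The symmetric argument gives $x_{2n+1} \to y \in B$, and then $\Vert x - y\Vert = \lim \Vert x_{2n} - x_{2n+1}\Vert = d$.

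To see that $x$ is a best proximity point, apply (ii) to the pair $(x_{2n+1}, x)$: since $\Vert x_{2n+2} - Tx\Vert = \Vert Tx_{2n+1} - Tx\Vert \le \lambda \Vert x_{2n+1} - x\Vert + (1-\lambda)d$ and the right side tends to $\lambda d + (1-\lambda)d = d$ while $x_{2n+2} \to x$, we get $\Vert x - Tx\Vert \le d$, hence $=d$. For uniqueness, suppose $x$ and $x'$ are best proximity points in $A$. Applying (ii) to $(x, Tx)$ shows $\Vert Tx - T^2x\Vert = d$, so $x$ and $T^2x$ both lie at distance $d$ from $Tx \in B$; since uniform convexity implies strict convexity and $A$ is convex, the midpoint argument forces $T^2x = x$, and likewise $T^2x' = x'$. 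Writing $p := \Vert x - Tx'\Vert$ and $q := \Vert x' - Tx\Vert$ and using $x = T^2x$, $x' = T^2x'$, two applications of (ii) give $p \le \lambda q + (1-\lambda)d$ and $q \le \lambda p + (1-\lambda)d$; adding these forces $p = q = d$. Thus $x$ and $x'$ both lie at distance $d$ from $Tx' \in B$, and strict convexity again yields $x = x'$. The degenerate case $d = 0$ reduces to the Banach contraction principle and is covered by the same strict-convexity steps.
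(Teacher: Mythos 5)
Your proof is correct, but it is worth noting that the paper never proves this statement itself: it is quoted from Eldred and Veeramani \cite{1}, and the closest thing to ``the paper's proof'' is its proof of the coupled analogue, Theorem \ref{theorem1}, which transplants the Eldred--Veeramani argument. The skeleton you follow is the same as that one: iterate $T$, show consecutive distances tend to $dist(A,B)$, invoke the two uniform-convexity lemmas (your two preliminary lemmas are exactly the scalar versions of the paper's Lemma \ref{lemma1} and Lemma \ref{lemma2}), conclude the even iterates are Cauchy, then extract existence and uniqueness. Where you genuinely diverge is at the crux, the verification that $\Vert x_{2m}-x_{2n+1} \Vert \leq dist(A,B)+\epsilon$ for all $m>n$ large: the paper (following \cite{1}) argues by contradiction, choosing a least offending index $m_i$, establishing the limit (\ref{equation6}), and deriving the absurdity $\lambda^2 \geq 1$, which in turn requires the auxiliary limits (\ref{equation4})--(\ref{equation5}); you instead verify the condition directly by unrolling the contraction $2n$ times along the opposite-parity pair $(x_{2m},x_{2n+1})$, obtaining the bound $dist(A,B)+\lambda^{2n}(M-dist(A,B))$, at the price of first proving boundedness of $\{x_{2n}\}$ via your $\lambda^2$-recursion for $e_n=\Vert x_{2n}-x_1\Vert$ (which checks out). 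Your route is more elementary and constructive -- it dispenses with the minimal-index trick and with (\ref{equation4})--(\ref{equation5}) and gives an explicit rate -- while the contradiction route never needs a boundedness estimate. For uniqueness the paper uses Lemma \ref{lemma2} plus a strict-inequality contradiction, whereas you use strict convexity twice (to get $T^2x=x$, and at the end to force $x=x'$) together with the symmetric inequalities $p \leq \lambda q+(1-\lambda)dist(A,B)$ and $q \leq \lambda p+(1-\lambda)dist(A,B)$, which force $p=q=dist(A,B)$; this is sound, is essentially the constant-sequence case of Lemma \ref{lemma2}, and your closing remark correctly disposes of the degenerate case $dist(A,B)=0$, the only place where the strict-convexity steps could be vacuous.
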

Sankar and Veeramani \cite{9} introduced the notion of cyclic nonexpansive mappings and studied best proximity points of such mappings.

Let $A$ and $B$ be  nonempty subsets of a normed linear space $X$. A mapping $T:A \rightarrow B$ is a cyclic nonexpansive mapping if it satisfies the following conditions:

(i) $T(A) \subset B$ and $T(B) \subset A$,

(ii) $\Vert Tx-Ty \Vert \leq \Vert x-y \Vert$ for all $x \in  A$ and $y \in B$.
\begin{theorem}
\emph{\cite[Theorem 3.5]{9}} Let $A$ and $B$ be two  nonempty, closed and convex subsets of a uniformly convex Banach space $X$. Suppose that $A$ is bounded and $A_0$ is compact. Let $T:A \rightarrow B$ be a  cyclic nonexpansive mapping. Then $T$ has a best proximity point.
\end{theorem}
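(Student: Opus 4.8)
The plan is to realise a best proximity point as one half of a \emph{minimal} $T$-invariant proximal pair and to use uniform convexity to collapse that pair to a single pair of points. First I would record the basic invariance property. Write $d:=dist(A,B)$ and take $x\in A_{0}$, so that $\Vert x-y\Vert=d$ for some $y\in B_{0}$. Since $Tx\in B$ and $Ty\in A$, the lower bound $\Vert Tx-Ty\Vert\ge d$ holds, while relative nonexpansiveness (applied to the cross pair $x\in A$, $y\in B$) gives $\Vert Tx-Ty\Vert\le\Vert x-y\Vert=d$; hence $\Vert Tx-Ty\Vert=d$, so $Tx\in B_{0}$ and $Ty\in A_{0}$. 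Thus $T(A_{0})\subset B_{0}$ and $T(B_{0})\subset A_{0}$. I would also use that $A_{0}$ and $B_{0}$ are convex (a standard consequence of the strict convexity of $X$) and that $A_{0}$ is compact by hypothesis, so that $(A_{0},B_{0})$ is a nonempty, convex, $T$-invariant pair with $dist(A_{0},B_{0})=d$ whose first component is compact.

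Next I would run a Zorn's lemma argument on the family $\Sigma$ of all pairs $(K_{1},K_{2})$ of nonempty closed convex sets satisfying $K_{1}\subset A_{0}$, $K_{2}\subset B_{0}$, $T(K_{1})\subset K_{2}$, $T(K_{2})\subset K_{1}$ and $dist(K_{1},K_{2})=d$, ordered by reverse inclusion in each component. The pair $(A_{0},B_{0})$ lies in $\Sigma$, and along a chain the componentwise intersections are again closed, convex and $T$-invariant; compactness of $A_{0}$ forces these intersections to be nonempty (finite intersection property) and, via a limiting argument for the attaining pairs, keeps the distance equal to $d$. Hence every chain has an upper bound, and Zorn's lemma yields a maximal element $(K_{1},K_{2})$, that is, a pair minimal with respect to inclusion.

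The decisive step, and the one I expect to be the main obstacle, is to show that this minimal pair consists of a single proximal pair of points. Assuming $\mathrm{diam}(K_{1})>0$, I would invoke uniform convexity through the geometric lemma underlying Eldred--Veeramani type arguments (equivalently, the proximal normal structure enjoyed by a compact convex proximal pair): the set of ``proximal centres'' of $K_{1}$ relative to $K_{2}$ is a nonempty, closed, convex \emph{proper} subset of $K_{1}$, and together with its counterpart in $K_{2}$ it is again $T$-invariant with distance $d$, contradicting minimality. The careful verification that this centre construction is genuinely proper and $T$-invariant is exactly where uniform convexity is indispensable, and it is the technical heart of the proof. Once $K_{1}=\{x^{*}\}$ and $K_{2}=\{y^{*}\}$ are forced to be singletons, invariance gives $Tx^{*}=y^{*}$ while $dist(K_{1},K_{2})=\Vert x^{*}-y^{*}\Vert=d$, so $\Vert x^{*}-Tx^{*}\Vert=d$ and $x^{*}$ is the desired best proximity point.
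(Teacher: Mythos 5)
Your proposal is viable, but it is not the argument behind this statement in the paper: the theorem is quoted from \cite{9} without proof, and the proof given in \cite{9} --- which this paper's own Theorems \ref{theorem2} and \ref{theorem3} reproduce in the coupled setting --- is an approximation argument rather than a minimal-pair argument. There one fixes a proximal pair $(x_0,y_0)\in A_0\times B_0$ with $\Vert x_0-y_0\Vert =dist(A,B)$, perturbs $T$ by convex combination with the image of that pair, $T_n x:=\frac{1}{n}Tx_0+\big(1-\frac{1}{n}\big)Tx$ (and symmetrically on $B$), checks that each $T_n$ is a cyclic contraction, obtains from the Eldred--Veeramani contraction theorem \cite{1} (whose role is played here by Theorem \ref{theorem1}) a best proximity point $x_n\in A_0$ of $T_n$, and then uses compactness of $A_0$ to extract a strongly convergent subsequence; nonexpansiveness survives the limit and the limit point is a best proximity point of $T$. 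That route is elementary once the contraction theorem is available, uses compactness only for the final extraction, and produces an explicit approximating sequence. Your route is instead the Eldred--Kirk--Veeramani minimal invariant pair/proximal normal structure argument; its payoff is generality, since carried out in full it does not need compactness of $A_0$ at all --- weak compactness of the pair $(A_0,B_0)$ (automatic here: $A$ is bounded, hence $B_0$ is bounded, and $X$ is reflexive) together with the proximal normal structure guaranteed by uniform convexity suffices.

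Two concrete points in your sketch are where the argument, as written, does not yet close. First, the $T$-invariance of the ``proximal centre'' pair does not follow from minimality in the naive way you indicate: to show that $z\in C_1:=\{z\in K_1: \Vert z-v\Vert \leq r \ \mbox{for all}\ v\in K_2\}$ implies $Tz\in C_2$, one must bound $\Vert u-Tz\Vert$ for \emph{every} $u\in K_1$, and the relatively nonexpansive condition only controls $\Vert Tw-Tz\Vert\leq\Vert w-z\Vert\leq r$ for $w\in K_2$. This propagates to all of $K_1$ only after one establishes the standard consequences of minimality, namely that the minimal pair is proximal and satisfies $K_1=\overline{\mathrm{con}}(T(K_2))$ and $K_2=\overline{\mathrm{con}}(T(K_1))$, so that every $u\in K_1$ lies in the closed convex hull of points of the form $Tw$. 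Second, the dichotomy should be run on $\delta(K_1,K_2):=\sup\{\Vert u-v\Vert: u\in K_1,\, v\in K_2\}$ rather than on $\mathrm{diam}(K_1)$: if $\delta(K_1,K_2)=dist(A,B)$ then any $x^*\in K_1$ already satisfies $\Vert x^*-Tx^*\Vert=dist(A,B)$ (no singleton claim is needed), whereas $\mathrm{diam}(K_1)=0$ by itself does not yield the conclusion unless you additionally invoke proximality of the minimal pair to see that every point of $K_2$, in particular $Tx^*$, realizes the distance to $x^*$. With these repairs your argument becomes a complete, correct, and strictly more general proof than the one the cited source gives.
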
 
Recently, many authors (see \cite{4,7,8,10}) have studied Ulam-Hyers stability for integral equations, differential equations, operatorial equations and various fixed point problems in different spaces. In this paper we extend the  notion of Ulam-Hyers stability to coupled best proximity point problem as follows:
\begin{definition}
Let $A$ and $B$ be two nonempty subsets of a Banach space $X$. Let $T:(A \times B) \cup (B \times A) \rightarrow A \cup B$ be a mapping. Then the coupled best proximity point problem is said to be Ulam-Hyers stable if and only if for each $\epsilon>0$ and each $(u,v) \in A \times B$ satisfy the following inequalities:
\begin{align*}
\Vert u-T(u,v) \Vert & \leq  \epsilon+dist(A,B),\\
\Vert v-T(v,u) \Vert & \leq \epsilon+dist(A,B)
\end{align*}
there exist $\alpha, \beta >0$ and a coupled best proximity point $(x^*,y^*)$ of $T$  such that
\begin{align*}
\Vert x^*-u \Vert &\leq \alpha \epsilon+ \beta dist(A,B),\\
\Vert y^*-v \Vert & \leq \alpha \epsilon+ \beta dist(A,B).
\end{align*}
\end{definition}

In this paper we introduce the notion of p-cyclic contraction and p-cyclic nonexpansive mappings. We extend the notion of cyclic contraction and cyclic nonexpansive mappings introduced by Eldred and Veeramani \cite{1} and Sankar and Veeramani \cite{9}, respectively. The main objective of the paper is to formulate necessary conditions which ensure the existence of a coupled best proximity point of such mappings in the setting of uniformly convex Banach spaces. The coupled best proximity point theorems obtained not only ascertain the existence of an approximate solution but also ensure its optimality.  Also, we investigate Ulam-Hyers stability of the coupled best proximity point problem in the case of p-cyclic contraction and p-cyclic nonexpansive mappings. An example is also provided to illustrate the efficiency of the results. 
\section{Main Results}
Throughout this section, we denote by $\mathbb{N}$ the set of natural numbers. We obtain coupled best proximity point results of p-cyclic contraction and p-cyclic nonexpansive mappings in the context of uniformly convex Banach spaces.
\begin{definition}
\emph{Let $A$ and $B$ be two nonempty subsets of a Banach space $X$. A mapping $T:(A \times B) \cup (B \times A) \rightarrow A \cup B$ is a} p-cyclic contraction mapping \emph{if it satisfies the following conditions:}

\emph{(i) $T(A,B) \subset B$ and $T(B,A) \subset A$,}

\emph{(ii) $\Vert T(x_1,y_1)-T(x_2,y_2) \Vert \leq  \lambda \Vert (x_1,y_1)-(x_2,y_2) \Vert +(1-\lambda)dist(A,B)$, for some $\lambda \in (0,1)$.}
\end{definition}
The following results are instrumental in proving the subsequent results:
\begin{proposition}\label{proposition1}
Let $A$ and $B$ be two nonempty subsets of a Banach space $X$. Let $T:(A \times B) \cup (B \times A) \rightarrow A \cup B$ be a p-cyclic contraction mapping. Then starting with any $(x_0,y_0) \in A \times B$ we have
$$\Vert (x_n,y_n)-(T(x_n,y_n),T(y_n,x_n)) \Vert \rightarrow dist(A,B),$$
where $x_n=T(x_{n-1},y_{n-1})$ and $y_n=T(y_{n-1},x_{n-1})$ for each $n \in \mathbb{N}$.
\end{proposition}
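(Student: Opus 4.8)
The plan is to exploit the fact that the pair $(T(x_n,y_n),T(y_n,x_n))$ is precisely the next iterate $(x_{n+1},y_{n+1})$, so that the quantity to be controlled is $D_n := \Vert (x_n,y_n)-(x_{n+1},y_{n+1})\Vert = \max\{\Vert x_n-x_{n+1}\Vert, \Vert y_n-y_{n+1}\Vert\}$, and the goal reduces to proving $D_n \to dist(A,B)$. First I would track the membership of the iterates: starting from $(x_0,y_0)\in A\times B$, condition (i) forces $x_n$ and $y_n$ to alternate between $A$ and $B$, with $x_n$ and $y_n$ always lying in opposite sets. Consequently each consecutive pair $x_n, x_{n+1}$ (and each pair $y_n, y_{n+1}$) has one member in $A$ and one in $B$, which yields the lower bound $D_n \ge dist(A,B)$ for every $n$.

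Next I would apply the contraction inequality (ii) coordinatewise. Writing $x_{n+1}=T(x_n,y_n)$ and $x_{n+2}=T(x_{n+1},y_{n+1})$ gives $\Vert x_{n+1}-x_{n+2}\Vert \le \lambda\Vert (x_n,y_n)-(x_{n+1},y_{n+1})\Vert + (1-\lambda)dist(A,B)$, and similarly $y_{n+1}=T(y_n,x_n)$, $y_{n+2}=T(y_{n+1},x_{n+1})$ gives $\Vert y_{n+1}-y_{n+2}\Vert \le \lambda\Vert (y_n,x_n)-(y_{n+1},x_{n+1})\Vert + (1-\lambda)dist(A,B)$. The crucial observation is that the product norm is symmetric in its two slots, so $\Vert (y_n,x_n)-(y_{n+1},x_{n+1})\Vert = \Vert (x_n,y_n)-(x_{n+1},y_{n+1})\Vert = D_n$; hence both coordinate estimates share the identical right-hand side $\lambda D_n + (1-\lambda)dist(A,B)$. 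Taking the maximum of the two left-hand sides then produces the clean scalar recursion $D_{n+1} \le \lambda D_n + (1-\lambda)dist(A,B)$.

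Finally I would solve this recursion. Subtracting $dist(A,B)$ from both sides shows $D_{n+1}-dist(A,B) \le \lambda\big(D_n - dist(A,B)\big)$, so that $D_n - dist(A,B) \le \lambda^n\big(D_0 - dist(A,B)\big) \to 0$ since $\lambda \in (0,1)$. Combined with the lower bound $D_n \ge dist(A,B)$, this forces $D_n \to dist(A,B)$, which is exactly the assertion. I expect the only genuinely delicate step to be the symmetry observation in the second paragraph: it is what allows the two coordinate inequalities to collapse into a single scalar recursion in $D_n$, since without it one would be left with a coupled pair of estimates. The membership bookkeeping and the geometric-decay argument are then routine.
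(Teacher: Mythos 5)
Your proposal is correct and takes essentially the same route as the paper: both derive the one-step recursion $D_{n+1} \leq \lambda D_n + (1-\lambda)\, dist(A,B)$ for $D_n = \Vert (x_n,y_n)-(x_{n+1},y_{n+1})\Vert$ from the contraction condition together with the symmetry of the max-norm, and then iterate it. The only differences are cosmetic: you solve the recursion by telescoping $D_n - dist(A,B)$ instead of unrolling the geometric series $\lambda^n D_0 + (1-\lambda)(1+\lambda+\cdots+\lambda^{n-1})dist(A,B)$ as the paper does, and you make explicit the lower bound $D_n \geq dist(A,B)$ (via the alternation of the iterates between $A$ and $B$), which the paper uses only implicitly when passing to the limit.
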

\begin{proof}
Consider 
\begin{align*}
\Vert x_n-T(x_n,y_n) \Vert &= \Vert T(x_{n-1},y_{n-1})-T(x_n,y_n) \Vert \\
& \leq \lambda \Vert (x_{n-1}-y_{n-1})-(x_n,y_n) \Vert+(1-\lambda) dist(A,B) 
\end{align*}
Similarly, $\Vert y_n-T(y_n,x_n) \Vert \leq \lambda \Vert (x_{n-1},y_{n-1})-(x_n,y_n) \Vert +(1-\lambda) dist (A,B)$. Therefore,
\begin{align*}
\Vert (x_n,y_n)-(T(x_n,y_n), T(y_n,x_n)) \Vert&= \max \{ \Vert x_n-T(x_n,y_n) \Vert, \Vert y_n-T(y_n,x_n) \Vert \}\\
 & \leq \lambda \Vert (x_{n-1},y_{n-1}),(x_n,y_n) \Vert +(1-\lambda) dist(A,B)\\
&\leq \lambda^2  \Vert (x_{n-2},y_{n-2})-(x_{n-1},y_{n-1}) \Vert+(1-\lambda)(1+\lambda) dist(A,B).
\end{align*}
Proceeding likewise we obtain
\begin{align*}
\Vert (x_n,y_n)-(T(x_n,y_n), T(y_n,x_n)) \Vert & \leq \lambda^n \Vert (x_0,y_0)-(x_1,y_1)\Vert  +(1-\lambda)(1+\lambda+\lambda^2+\\
& \quad \ldots +\lambda^{n-1}) dist(A,B)\\
& \leq  \lambda^n  \Vert (x_0,y_0)-(x_1,y_1) \Vert + dist(A,B)
\end{align*}
Letting $n \rightarrow \infty$ we get, $\Vert (x_n,y_n)-(T(x_n,y_n), T(y_n,x_n)) \Vert \rightarrow dist(A,B)$.
\end{proof}
\begin{proposition}\label{proposition2}
Let $A$ and $B$ be two nonempty and closed subsets of a Banach space $X$. Let $T:(A \times B) \cup (B \times A) \rightarrow A \cup B$ be a p-cyclic contraction mapping. Let $(x_0,y_0) \in A \times B$, $x_n=T(x_{n-1},y_{n-1})$ and $y_n=T(y_{n-1},x_{n-1})$ for each $n \in \mathbb{N}$.  Suppose that $\{(x_{2n},y_{2n}) \}$ has a convergent subsequence in $A \times B$. Then there exists $(x,y) \in A \times B$ such that
\begin{align*}
\Vert x-T(x,y) \Vert & =dist(A,B),\\
\Vert y-T(y,x) \Vert &=dist(A,B).
\end{align*}
\end{proposition}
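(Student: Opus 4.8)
The plan is to extract the limit of the convergent subsequence and show it is the desired coupled best proximity point, where the decisive move will be to route the estimate \emph{backward} through the recursion rather than forward. First I would record the parity structure: since $T(A,B)\subset B$ and $T(B,A)\subset A$, a short induction gives $(x_{2n},y_{2n})\in A\times B$ and $(x_{2n-1},y_{2n-1})\in B\times A$ for every $n\in\mathbb{N}$. Let $\{(x_{2n_k},y_{2n_k})\}$ be the convergent subsequence; because $A$ and $B$ are closed, its limit $(x,y)$ lies in $A\times B$, so $x_{2n_k}\to x$ and $y_{2n_k}\to y$.

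The key step is to bound $\Vert x_{2n_k}-T(x,y)\Vert$ rather than $\Vert x-T(x,y)\Vert$ directly, using $x_{2n_k}=T(x_{2n_k-1},y_{2n_k-1})$ and the p-cyclic contraction inequality:
\[
\Vert x_{2n_k}-T(x,y)\Vert=\Vert T(x_{2n_k-1},y_{2n_k-1})-T(x,y)\Vert\le \lambda\,\Vert (x_{2n_k-1},y_{2n_k-1})-(x,y)\Vert+(1-\lambda)\,dist(A,B).
\]
I would then show $\Vert (x_{2n_k-1},y_{2n_k-1})-(x,y)\Vert\to dist(A,B)$. The lower bound $\ge dist(A,B)$ is immediate from the parity structure, since $(x_{2n_k-1},y_{2n_k-1})\in B\times A$, $(x,y)\in A\times B$ force the first coordinate difference to have norm at least $dist(A,B)$. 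For the upper bound I would invoke Proposition~\ref{proposition1}, which gives $\Vert (x_n,y_n)-(x_{n+1},y_{n+1})\Vert\to dist(A,B)$, together with
\[
\Vert (x_{2n_k-1},y_{2n_k-1})-(x,y)\Vert\le \Vert (x_{2n_k-1},y_{2n_k-1})-(x_{2n_k},y_{2n_k})\Vert+\Vert (x_{2n_k},y_{2n_k})-(x,y)\Vert,
\]
where the first term tends to $dist(A,B)$ and the second to $0$.

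Combining these, $\limsup_k\Vert x_{2n_k}-T(x,y)\Vert\le \lambda\,dist(A,B)+(1-\lambda)\,dist(A,B)=dist(A,B)$. Since $x_{2n_k}\to x$, continuity of the norm yields $\Vert x_{2n_k}-T(x,y)\Vert\to\Vert x-T(x,y)\Vert$, so $\Vert x-T(x,y)\Vert\le dist(A,B)$; as $x\in A$ and $T(x,y)\in B$ give the reverse inequality, equality holds. The identical argument applied to $y_{2n_k}=T(y_{2n_k-1},x_{2n_k-1})$ and $T(y,x)$, using $\Vert (y_{2n_k-1},x_{2n_k-1})-(y,x)\Vert=\Vert (x_{2n_k-1},y_{2n_k-1})-(x,y)\Vert\to dist(A,B)$, delivers $\Vert y-T(y,x)\Vert=dist(A,B)$, completing the proof.

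The main obstacle is the choice of estimate itself. The naive forward bound $\Vert x-T(x,y)\Vert\le \Vert x-x_{2n_k+1}\Vert+\Vert x_{2n_k+1}-T(x,y)\Vert$ only produces $\le(2-\lambda)\,dist(A,B)$, because $T$ need not be continuous when $dist(A,B)>0$: the additive term $(1-\lambda)\,dist(A,B)$ in the contraction does not vanish as the arguments coincide, so $x_{2n_k+1}$ need not converge to $T(x,y)$ and the two contributions cannot be made to collapse. Routing the estimate backward through $x_{2n_k}=T(x_{2n_k-1},y_{2n_k-1})$ is exactly what forces the factor $\lambda$ onto a quantity already tending to $dist(A,B)$, so that the bound telescopes to a single $dist(A,B)$; verifying $\Vert (x_{2n_k-1},y_{2n_k-1})-(x,y)\Vert\to dist(A,B)$ is the technical heart and the one place where closedness of $A,B$ (rather than uniform convexity) is all that is needed.
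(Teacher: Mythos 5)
Your proof is correct and takes essentially the same route as the paper's: both estimate backward through the recursion $x_{2n_k}=T(x_{2n_k-1},y_{2n_k-1})$, show $\Vert (x_{2n_k-1},y_{2n_k-1})-(x,y)\Vert \rightarrow dist(A,B)$ by combining Proposition \ref{proposition1} with the triangle inequality through $(x_{2n_k},y_{2n_k})$, feed this into the contraction inequality, and finish by continuity of the norm. The only difference is cosmetic: the paper carries the whole estimate in the product max-norm and extracts the coordinatewise equalities at the very end, whereas you argue coordinatewise from the start.
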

\begin{proof}
Suppose that $\{(x_{2n_i},y_{2n_i}) \}$ be  a subsequence of $\{(x_{2n},y_{2n}) \}$ converging to $(x,y) \in A \times B$. Consider
\begin{align*}
dist(A,B)& \leq \max \{ \Vert x-x_{2n_i-1} \Vert, \Vert y-y_{2n_i-1} \Vert \}\\
&= \Vert (x,y)-(x_{2n_i-1},y_{2n_i-1}) \Vert \\
& \leq \Vert (x,y)- (x_{2n_i},y_{2n_i}) \Vert +\Vert (x_{2n_i},y_{2n_i})-(x_{2n_i-1},y_{2n_i-1}) \Vert.
\end{align*}
Using Proposition \ref{proposition1} we deduce that
\begin{equation}\label{equation1}
\Vert (x,y)-(x_{2n_i-1},y_{2n_i-1}) \Vert \rightarrow dist(A,B).
\end{equation}
 Consider
\begin{align*}
dist(A,B)& \leq \Vert (x_{2n_i},y_{2n_i})-(T(x,y),T(y,x)) \Vert \\
&= \Vert (T(x_{2n_i-1},y_{2n_i-1}),T(y_{2n_i-1},x_{2n_i-1}))-(T(x,y),T(y,x)) \Vert \\
& = \max \{ \Vert T(x_{2n_i-1},y_{2n_i-1})-T(x,y) \Vert, \Vert T(y_{2n_i-1},x_{2n_i-1})-T(y,x) \Vert \}\\
&\leq \lambda  \Vert (x_{2n_i-1}, y_{2n_i-1})-(x,y) \Vert +(1- \lambda) dist(A,B).
\end{align*}
Using (\ref{equation1}) we get $\lim\limits_{k \rightarrow \infty} \Vert (x_{2n_i},y_{2n_i})-(T(x,y),T(y,x))\Vert = dist(A,B)$. Since $\Vert. \Vert$ is continuous, $ \Vert (x,y)-(T(x,y),T(y,x)) \Vert = dist(A,B)$. Now 
\begin{align*}
dist(A,B) \leq \Vert x-T(x,y) \Vert& \leq \max \{ \Vert x-T(x,y) \Vert, \Vert y-T(y,x) \Vert \}\\
&= \Vert (x,y)-(T(x,y),T(y,x)) \Vert\\
& =dist(A,B).
\end{align*}
Therefore, $\Vert x-T(x,y) \Vert =dist(A,B)$. Similarly, $\Vert y-T(y,x) \Vert = dist(A,B)$. 
\end{proof}
 A parallel result  to \cite[Lemma 3.7]{1} can be obtained in the following form:
\begin{lemma}\label{lemma1}
Let $A$ and $B$ be two nonempty, closed and convex subsets of a uniformly convex Banach space $X$. Let $\{(x_n,y_n) \}$ and $\{(u_n,v_n) \}$ be sequences in $A \times B$ and $\{(w_n,z_n) \}$ be a sequence in $B \times A$ satisfying

(i) $\Vert (x_n,y_n)-(w_n,z_n) \Vert \rightarrow dist(A,B)$,

(ii) for each $\epsilon>0$ there exists $N \in \mathbb{N}$ such that for all $m>n \geq N$ we have
$$\Vert (u_m,v_m)-(w_n,z_n) \Vert \leq dist(A,B)+\epsilon.$$
Then for each $\epsilon>0$ there exists $N_0 \in \mathbb{N}$ such that for all $m>n \geq N_0$ we have
$$\Vert (u_m,v_m)-(x_n,y_n) \Vert \leq \epsilon.$$
\end{lemma}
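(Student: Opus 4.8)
The plan is to exploit the fact that, although the max-norm on $X \times X$ is itself not uniformly convex, the defining inequalities decouple into two coordinatewise inequalities to which the scalar uniform-convexity argument of \cite[Lemma 3.7]{1} can be run. Write $d = dist(A,B)$ and record the coordinate memberships: $x_n, u_n, z_n \in A$ while $y_n, v_n, w_n \in B$. Since $x_n \in A$ and $w_n \in B$ we always have $\Vert x_n - w_n \Vert \geq d$, and likewise $\Vert y_n - z_n \Vert \geq d$; because hypothesis (i) forces the maximum of these two quantities to tend to $d$, a squeeze gives $\Vert x_n - w_n \Vert \to d$ and $\Vert y_n - z_n \Vert \to d$ separately. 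In the same way, hypothesis (ii) yields, for $m > n \geq N$, the two scalar bounds $\Vert u_m - w_n \Vert \leq d + \epsilon$ and $\Vert v_m - z_n \Vert \leq d + \epsilon$, each summand already being at least $d$. This reduces the lemma to showing, in the first coordinate, that $\Vert u_m - x_n \Vert \leq \epsilon$ eventually, and symmetrically $\Vert v_m - y_n \Vert \leq \epsilon$ in the second.

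First I would treat the first coordinate by contradiction. Suppose there is $\epsilon_0 > 0$ and indices $m > n$, both arbitrarily large, with $\Vert u_m - x_n \Vert > \epsilon_0$. Since $u_m, x_n \in A$ and $A$ is convex, the midpoint $\tfrac{1}{2}(u_m + x_n)$ lies in $A$, so its distance to $w_n \in B$ is at least $d$, i.e. $\big\Vert \tfrac{1}{2}(u_m + x_n) - w_n \big\Vert \geq d$. On the other hand, for $n$ large the vectors $p = u_m - w_n$ and $q = x_n - w_n$ satisfy $\Vert p \Vert, \Vert q \Vert \leq d + \epsilon$ by the bounds just obtained, while $\big\Vert \tfrac{1}{2}(p+q) \big\Vert \geq d$. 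Thus the two vectors have norms close to $d$ together with a midpoint whose norm is essentially that same common value; the quantitative form of uniform convexity --- for each $r>0$ and $\eta>0$ there is $\epsilon>0$ such that $\Vert p \Vert, \Vert q \Vert \leq r+\epsilon$ and $\big\Vert \tfrac{1}{2}(p+q) \big\Vert \geq r$ force $\Vert p-q \Vert \leq \eta$, applied with $r = d$ --- makes $\Vert p - q \Vert = \Vert u_m - x_n \Vert$ as small as desired once $\epsilon$ is chosen small enough. Picking $\epsilon$ so that this bound falls below $\epsilon_0$ contradicts $\Vert u_m - x_n \Vert > \epsilon_0$. The second coordinate is identical after interchanging the roles of $A$ and $B$: there one uses that $\tfrac{1}{2}(v_m + y_n) \in B$ has distance at least $d$ from $z_n \in A$.

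Finally I would assemble the two coordinate conclusions: given $\epsilon > 0$, take $N_0$ to be the larger of the two thresholds produced above, so that for all $m > n \geq N_0$ both $\Vert u_m - x_n \Vert \leq \epsilon$ and $\Vert v_m - y_n \Vert \leq \epsilon$ hold, whence $\Vert (u_m, v_m) - (x_n, y_n) \Vert = \max \{ \Vert u_m - x_n \Vert, \Vert v_m - y_n \Vert \} \leq \epsilon$, which is the assertion. The degenerate case $d = 0$ needs separate and easier handling: then $\Vert x_n - w_n \Vert \to 0$ and $\Vert u_m - w_n \Vert \leq \epsilon$, so the triangle inequality $\Vert u_m - x_n \Vert \leq \Vert u_m - w_n \Vert + \Vert w_n - x_n \Vert$ already closes the first coordinate, and similarly the second. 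The main obstacle I anticipate is exactly the one the decoupling step removes: uniform convexity cannot be invoked directly in $X \times X$ under the max-norm, so the whole argument hinges on first observing that each coordinate distance is individually pinned down from below by $d$, which is what converts the vector hypotheses into genuine scalar, uniformly convex estimates.
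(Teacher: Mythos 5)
Your proposal is correct, and it is essentially the argument the paper intends but never writes down: the paper states this lemma with no proof at all, merely calling it ``a parallel result to \cite[Lemma 3.7]{1}.'' Your coordinatewise decoupling is exactly what justifies that citation. Indeed, one cannot apply the Eldred--Veeramani lemma (or its uniform-convexity proof) directly in $X \times X$, because the max-norm product of uniformly convex spaces is not uniformly convex (it is not even strictly convex); the essential observation is yours: since $x_n, u_m \in A$, $w_n \in B$ (and symmetrically in the second coordinate), each coordinate distance is bounded below by $dist(A,B)$, so hypothesis (i) squeezes both coordinates individually and hypothesis (ii) passes to each coordinate trivially, reducing the claim to two instances of the scalar lemma --- one with the roles of $A$ and $B$ interchanged, which is legitimate since both sets are assumed convex. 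Your sketch of the scalar step (midpoint $\tfrac{1}{2}(u_m + x_n) \in A$ forcing $\bigl\Vert \tfrac{1}{2}(p+q) \bigr\Vert \geq dist(A,B)$, then the scaled modulus-of-convexity estimate) is the same mechanism as in \cite{1}, and your separate treatment of the degenerate case $dist(A,B) = 0$ by the triangle inequality is also needed there, since the scaling argument requires $r > 0$. In short: no gap, and your writeup actually supplies a justification the paper leaves implicit.
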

\begin{lemma}\label{lemma2}
Let $A$ and $B$ be two nonempty, closed and convex subsets of a uniformly convex Banach space $X$. Let $\{(x_n,y_n) \}$ and $\{(u_n,v_n) \}$ be sequences in $A \times B$ and $\{(w_n,z_n) \}$ be a sequence in $B \times A$ satisfying

(i) $\Vert (x_n,y_n)- (w_n,z_n) \Vert \rightarrow dist(A,B)$,

(ii) $\Vert (u_n,v_n)-(w_n,z_n) \Vert \rightarrow dist(A,B)$.\\
Then $\Vert (x_n,y_n) -(u_n,v_n) \Vert \rightarrow 0$.
\end{lemma}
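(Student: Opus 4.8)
The plan is to reduce this product-space statement, phrased in the max norm on $X \times X$, to two scalar statements in $X$ and to invoke the uniform convexity of $X$ componentwise. The crucial point to keep in mind is that $X \times X$ under $\Vert (x,y) \Vert = \max\{\Vert x \Vert, \Vert y \Vert\}$ is \emph{not} uniformly convex in general, so one cannot simply quote a product-space version of the preceding device; the decoupling into coordinates is what makes the argument go through.

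First I would unpack the hypotheses. Write $d := dist(A,B)$. Since $(x_n,y_n),(u_n,v_n) \in A \times B$ and $(w_n,z_n) \in B \times A$, we have $x_n,u_n,z_n \in A$ and $y_n,v_n,w_n \in B$. Consequently each of the four coordinate distances $\Vert x_n - w_n \Vert$, $\Vert u_n - w_n \Vert$ (both between $A$ and $B$) and $\Vert y_n - z_n \Vert$, $\Vert v_n - z_n \Vert$ (both between $B$ and $A$) is at least $d$. Because a maximum of two quantities, each $\geq d$, that tends to $d$ forces both quantities to tend to $d$, hypothesis (i) yields $\Vert x_n - w_n \Vert \to d$ and $\Vert y_n - z_n \Vert \to d$, and hypothesis (ii) yields $\Vert u_n - w_n \Vert \to d$ and $\Vert v_n - z_n \Vert \to d$. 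Likewise the desired conclusion $\Vert (x_n,y_n) - (u_n,v_n) \Vert \to 0$ is equivalent to the two scalar limits $\Vert x_n - u_n \Vert \to 0$ and $\Vert y_n - v_n \Vert \to 0$.

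It therefore remains to establish each coordinate limit, and here I would run the classical uniform convexity argument of Eldred and Veeramani inside $X$ itself. For the first coordinate, $x_n,u_n \in A$, $w_n \in B$, with $\Vert x_n - w_n \Vert \to d$ and $\Vert u_n - w_n \Vert \to d$. If $d = 0$ the claim is immediate from the triangle inequality. If $d > 0$, suppose $\Vert x_n - u_n \Vert \not\to 0$ and pass to a subsequence with $\Vert x_n - u_n \Vert \geq \epsilon$ for some $\epsilon > 0$. Convexity of $A$ gives $\tfrac{1}{2}(x_n + u_n) \in A$, hence $\Vert \tfrac{1}{2}(x_n + u_n) - w_n \Vert \geq d$. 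Applying the scaled form of uniform convexity (the standard consequence of the definition in the Introduction: for each $\epsilon' > 0$ and $R > 0$ there is $\delta > 0$ such that $\Vert a \Vert, \Vert b \Vert \le R$ and $\Vert a - b \Vert \ge \epsilon'$ imply $\Vert \tfrac{1}{2}(a+b) \Vert \le R(1-\delta)$) to the vectors $a_n := x_n - w_n$ and $b_n := u_n - w_n$, whose norms are bounded by $r_n := \max\{\Vert x_n - w_n \Vert, \Vert u_n - w_n \Vert\} \to d$ and which satisfy $\Vert a_n - b_n \Vert = \Vert x_n - u_n \Vert \geq \epsilon$, produces a uniform $\delta > 0$ with $\Vert \tfrac{1}{2}(a_n + b_n) \Vert \leq r_n(1 - \delta)$ for all large $n$. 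Since $\tfrac{1}{2}(a_n+b_n) = \tfrac{1}{2}(x_n+u_n) - w_n$, this reads $d \leq r_n(1-\delta)$, and letting $n \to \infty$ gives $d \leq d(1-\delta) < d$, a contradiction; thus $\Vert x_n - u_n \Vert \to 0$. The second coordinate is handled symmetrically, interchanging the roles of $A$ and $B$ (using $dist(A,B) = dist(B,A)$ and the convexity of $B$ to place $\tfrac{1}{2}(y_n + v_n) \in B$), which yields $\Vert y_n - v_n \Vert \to 0$.

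The step I expect to be the main obstacle is not the uniform convexity estimate itself, which is the standard Eldred--Veeramani device, but rather the bookkeeping that justifies the componentwise reduction: one must verify that each coordinate pairing genuinely lies in $A$ paired against $B$ so that all four coordinate distances are bounded below by $d$, and one must be careful that the radius $r_n$ varies with $n$. Because $r_n \to d$ and $\epsilon/r_n$ stays bounded away from $0$, a single $\delta$ can be extracted uniformly, which is exactly what the contradiction requires; overlooking the variation of $r_n$ would be the easy mistake to make.
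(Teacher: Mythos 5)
Your proof is correct, and it supplies something the paper itself does not: the paper states this lemma (and Lemma \ref{lemma1}) with no proof at all, presenting both merely as ``parallel results'' to Lemmas 3.7 and 3.8 of Eldred and Veeramani \cite{1}. Your argument is the right way to make that parallel rigorous, and your opening caveat identifies the precise reason a proof is genuinely needed: $X \times X$ under $\Vert (x,y)\Vert = \max\{\Vert x\Vert,\Vert y\Vert\}$ is never uniformly convex (for $\Vert e\Vert =1$, the vectors $(e,e)$ and $(e,-e)$ are unit vectors whose difference has norm $2$ but whose midpoint $(e,0)$ still has norm $1$), so one cannot simply quote \cite[Lemma 3.8]{1} in the product space; some decoupling is unavoidable. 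Your reduction is sound: since $x_n,u_n,z_n \in A$ and $y_n,v_n,w_n \in B$, all four coordinate distances are bounded below by $dist(A,B)$, so a maximum of two such quantities converging to $dist(A,B)$ forces each coordinate to converge to $dist(A,B)$, and the desired conclusion is likewise a maximum of two coordinate quantities. The componentwise limits $\Vert x_n-u_n\Vert \rightarrow 0$ and $\Vert y_n-v_n\Vert \rightarrow 0$ then follow exactly as in Eldred--Veeramani, and your execution is careful where it needs to be: the case $dist(A,B)=0$ is separated out (the uniform convexity device would otherwise degenerate), the midpoint $\frac{1}{2}(x_n+u_n)$ lies in $A$ by convexity so its distance to $w_n \in B$ is at least $dist(A,B)$, and the varying radius $r_n \rightarrow dist(A,B)$ is handled by monotonicity of the modulus of convexity, yielding a single $\delta>0$ for all large $n$ and hence the contradiction $dist(A,B) \leq (1-\delta)\, dist(A,B)$. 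In short: correct, complete, and it fills a real gap --- the paper's citation-by-analogy glosses over exactly the point (failure of uniform convexity of the max-norm product) that your proof confronts head-on.
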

\begin{theorem}\label{theorem1}
Let $A$ and $B$ be two nonempty, closed and convex subsets of a uniformly convex Banach space $X$. Let $T:(A \times B) \cup (B \times A) \rightarrow A \cup B$ be a p-cyclic contraction mapping. Then $T$ has a unique coupled best proximity point. Moreover, the coupled best proximity point problem is Ulam-Hyers stable.
\end{theorem}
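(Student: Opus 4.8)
The plan is to prove the theorem in three stages: existence of a coupled best proximity point, its uniqueness, and the Ulam--Hyers stability estimate. Throughout I write $d:=dist(A,B)$, fix a starting point $(x_0,y_0)\in A\times B$, and use the iterates $x_n=T(x_{n-1},y_{n-1})$, $y_n=T(y_{n-1},x_{n-1})$. Condition (i) forces $(x_{2n},y_{2n})\in A\times B$ and $(x_{2n+1},y_{2n+1})\in B\times A$ for every $n$, which is what lets me feed these iterates into Lemmas \ref{lemma1} and \ref{lemma2}.

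\textbf{Existence.} First I would show that $\{(x_{2n},y_{2n})\}$ is Cauchy by applying Lemma \ref{lemma1} with the identifications $(x_n,y_n)\leftarrow(x_{2n},y_{2n})$, $(u_n,v_n)\leftarrow(x_{2n},y_{2n})$ and $(w_n,z_n)\leftarrow(x_{2n+1},y_{2n+1})$. Hypothesis (i) of that lemma is immediate from Proposition \ref{proposition1}. Hypothesis (ii), namely $\Vert(x_{2m},y_{2m})-(x_{2n+1},y_{2n+1})\Vert\le d+\epsilon$ for $m>n$ large, is the technical heart. Peeling off the outer $T$ on both arguments repeatedly via condition (ii) of the p-cyclic contraction (and using the symmetry of the max-norm on the $y$-component) gives $\Vert(x_{2m},y_{2m})-(x_{2n+1},y_{2n+1})\Vert\le\lambda^{2n+1}\Vert(x_{2(m-n)-1},y_{2(m-n)-1})-(x_0,y_0)\Vert+(1-\lambda^{2n+1})d$. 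A separate one-step recursion $a_k\le\lambda a_{k-1}+C$, with $a_k:=\Vert(x_k,y_k)-(x_0,y_0)\Vert$ and $C:=(1-\lambda)d+a_1$, shows the orbit is norm-bounded even though $A,B$ are not assumed bounded; hence the $\lambda^{2n+1}$ factor annihilates the first term uniformly in $m$. Lemma \ref{lemma1} then yields the Cauchy property, completeness of $X$ and closedness of $A,B$ make $\{(x_{2n},y_{2n})\}$ converge in $A\times B$, and Proposition \ref{proposition2} produces a coupled best proximity point $(x^*,y^*)$.

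\textbf{Uniqueness.} Here I would first record the ``$T^2$-identities'' $x^*=T(T(x^*,y^*),T(y^*,x^*))$ and $y^*=T(T(y^*,x^*),T(x^*,y^*))$ valid for any coupled best proximity point. Writing $p=T(x^*,y^*)\in B$ and $q=T(y^*,x^*)\in A$, condition (ii) together with $\Vert(x^*,y^*)-(p,q)\Vert=d$ forces $\Vert(p,q)-(T(p,q),T(q,p))\Vert=d$, and feeding the constant sequences $(x^*,y^*)$, $(T(p,q),T(q,p))$, $(p,q)$ into Lemma \ref{lemma2} gives these identities. Now given a second coupled best proximity point $(x',y')$ with $p'=T(x',y')$, $q'=T(y',x')$, the contraction applied to the $T^2$-representations yields the coupled inequalities $r\le\lambda s+(1-\lambda)d$ and $s\le\lambda r+(1-\lambda)d$, where $r=\Vert(x',y')-(p,q)\Vert$ and $s=\Vert(p',q')-(x^*,y^*)\Vert$; these close to $r\le\lambda^2 r+(1-\lambda^2)d$. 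Since $r\ge d$ by the cyclic structure, this forces $r=d$. Applying Lemma \ref{lemma2} to the constant sequences $(x^*,y^*)$, $(x',y')$, $(p,q)$, whose hypotheses are now exactly $\Vert(x^*,y^*)-(p,q)\Vert=d$ and $\Vert(x',y')-(p,q)\Vert=r=d$, yields $(x^*,y^*)=(x',y')$.

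\textbf{Ulam--Hyers stability.} Finally, given $\epsilon>0$ and $(u,v)\in A\times B$ with $\Vert u-T(u,v)\Vert\le\epsilon+d$ and $\Vert v-T(v,u)\Vert\le\epsilon+d$, I would estimate directly. Inserting $T(x^*,y^*)$ and $T(u,v)$ and using the triangle inequality with condition (ii) gives $\Vert x^*-u\Vert\le d+(\lambda E+(1-\lambda)d)+(\epsilon+d)=\lambda E+\epsilon+(3-\lambda)d$, where $E=\Vert(x^*,y^*)-(u,v)\Vert$, and symmetrically for $\Vert y^*-v\Vert$. Taking the maximum yields $E\le\lambda E+\epsilon+(3-\lambda)d$, hence $E\le\frac{1}{1-\lambda}\epsilon+\frac{3-\lambda}{1-\lambda}d$, so the stability holds with $\alpha=\frac{1}{1-\lambda}$ and $\beta=\frac{3-\lambda}{1-\lambda}$. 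I expect the main obstacle to be the verification of hypothesis (ii) of Lemma \ref{lemma1} in the existence step, in particular the orbit-boundedness argument that makes the $\lambda^{2n+1}$ term vanish uniformly in $m$; the uniqueness and stability steps are then bookkeeping on top of Lemmas \ref{lemma1}, \ref{lemma2} and Propositions \ref{proposition1}, \ref{proposition2}.
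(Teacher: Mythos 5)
Your proposal is correct, and it follows the paper's skeleton — Proposition \ref{proposition1} plus Lemma \ref{lemma1} to get that $\{(x_{2n},y_{2n})\}$ is Cauchy, Proposition \ref{proposition2} for existence, the $T^2$-identities plus Lemma \ref{lemma2} for uniqueness, and the identical stability computation with $\alpha=\frac{1}{1-\lambda}$, $\beta=\frac{3-\lambda}{1-\lambda}$ — but you execute the two delicate steps by genuinely different, more direct arguments. For hypothesis (ii) of Lemma \ref{lemma1} the paper argues by contradiction in the Eldred--Veeramani style: it first derives the auxiliary limits (\ref{equation3})--(\ref{equation5}), chooses a minimal index $m_i$, extracts the limit identity (\ref{equation6}), and concludes $\lambda^2\ge 1$. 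Your peeling estimate $\Vert(x_{2m},y_{2m})-(x_{2n+1},y_{2n+1})\Vert\le\lambda^{2n+1}a_{2(m-n)-1}+(1-\lambda^{2n+1})\,dist(A,B)$, combined with the orbit bound coming from the recursion $a_k\le\lambda a_{k-1}+C$, yields the same conclusion uniformly in $m$ with no contradiction and no need for (\ref{equation3})--(\ref{equation6}) at all; this is a real simplification. The one point to flag: your boundedness recursion applies the contraction inequality to the pair $(x_{k-1},y_{k-1})$, $(x_0,y_0)$, which both lie in $A\times B$ when $k-1$ is even. This is legitimate under the paper's definition of p-cyclic contraction, which imposes condition (ii) on arbitrary pairs of the domain (and which the paper itself exploits for same-set pairs when checking that $T_n$ is a p-cyclic contraction in the proof of Theorem \ref{theorem2}); under the restrictive cross-pair reading used for classical cyclic contractions, however, that step would break and would require, say, a two-step recursion on odd iterates — presumably the reason the paper keeps the contradiction route, which only ever compares cross pairs. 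In the uniqueness step both arguments rest on $x=T(T(x,y),T(y,x))$, $y=T(T(y,x),T(x,y))$ obtained from Lemma \ref{lemma2}; the paper then proves equality of the two cross-distances ((\ref{equation7})--(\ref{equation9})) and rules out the case $>dist(A,B)$ by a strict-inequality contradiction, whereas you close the symmetric system $r\le\lambda s+(1-\lambda)d$, $s\le\lambda r+(1-\lambda)d$ to $r\le d$, hence $r=d$, and finish with Lemma \ref{lemma2}; your ending is shorter and contradiction-free. The stability estimate coincides with the paper's, constants included.
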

\begin{proof}
By Proposition \ref{proposition1} we have
\begin{equation} \label{equation2}
\Vert (x_{2n},y_{2n})-(T(x_{2n},y_{2n}),T(y_{2n},x_{2n})) \Vert \rightarrow dist(A,B).
\end{equation}
We show that $\Vert (T(x_{2n+1},y_{2n+1}),T(y_{2n+1},x_{2n+1}))-(T(x_{2n},y_{2n}),T(y_{2n},x_{2n})) \Vert \rightarrow dist(A,B)$. Consider
\begin{align*}
dist(A,B) & \leq \Vert (T(x_{2n+1},y_{2n+1}),T(y_{2n+1},x_{2n+1}))- (T(x_{2n},y_{2n}),T(y_{2n},x_{2n})) \Vert\\
 &= \max \{ \Vert T(x_{2n+1},y_{2n+1})-T(x_{2n},y_{2n}) \Vert,  \Vert T(y_{2n+1},x_{2n+1})-T(y_{2n},x_{2n}) \Vert \}\\
& \leq \lambda \Vert (x_{2n+1},y_{2n+1})-(x_{2n},y_{2n}) \Vert +(1- \lambda) dist(A,B)\\
&= \lambda \Vert (T(x_{2n},y_{2n}),T(y_{2n},x_{2n}))-(x_{2n},y_{2n}) \Vert +(1-\lambda) dist(A,B)
\end{align*}
which gives \begin{equation} \label{equation3}
 \Vert (T(x_{2n+1},y_{2n+1}),T(y_{2n+1},x_{2n+1}))-(T(x_{2n},y_{2n}),T(y_{2n},x_{2n})) \Vert  \rightarrow dist(A,B).
\end{equation}
Using (\ref{equation2}), (\ref{equation3}) and Lemma \ref{lemma2} we deduce that 
\begin{equation}\label{equation4}
\Vert (x_{2n},y_{2n})- (T(x_{2n+1},y_{2n+1}),T(y_{2n+1},x_{2n+1})) \Vert \rightarrow 0.
\end{equation}
Similarly, we show that
\begin{equation}\label{equation5}
\Vert (x_{2n+1},y_{2n+1})- (T(x_{2n+2},y_{2n+2}),T(y_{2n+2},x_{2n+2})) \Vert \rightarrow 0.
\end{equation}
Now we show that for each $\epsilon>0$ there exists $N \in \mathbb{N}$ such that for all $m> n \geq N$ we have
$$\Vert (x_{2m},y_{2m})-(T(x_{2n},y_{2n}),T(y_{2n},x_{2n})) \Vert \leq dist(A,B)+ \epsilon.$$
Assume on the contrary, there exists $\epsilon_0>0$ such that for all $i \in \mathbb{N}$, there exists $m_i>n_i \geq i$ for which
$$\Vert (x_{2m_i},y_{2m_i})-(T(x_{2n_i},y_{2n_i}),T(y_{2n_i},x_{2n_i})) \Vert > dist(A,B)+ \epsilon_0.$$
Let $m_i$ be the least positive integer satisfying this  inequality, i.e., 
$$\Vert (x_{2(m_i-1)},y_{2(m_i-1)})-(T(x_{2n_i},y_{2n_i}),T(y_{2n_i},x_{2n_i})) \Vert \leq dist(A,B)+ \epsilon_0.$$
Consider
\begin{align*}
dist(A,B)+ \epsilon_0 & < \Vert (x_{2m_i},y_{2m_i})-(T(x_{2n_i},y_{2n_i}),T(y_{2n_i},x_{2n_i})) \Vert \\
& \leq \Vert (x_{2m_i},y_{2m_i})-(x_{2(m_i-1)},y_{2(m_i-1)}) \Vert+ \Vert (x_{2(m_i-1)},y_{2(m_i-1)})-(T(x_{2n_i},y_{2n_i}),T(y_{2n_i},x_{2n_i})) \Vert \\
& \leq \Vert (x_{2m_i},y_{2m_i})-(x_{2m_i-2},y_{2m_i-2}) \Vert +dist(A,B)+ \epsilon_0.
\end{align*}
Letting $i \rightarrow \infty$ and using (\ref{equation4}) we have 
\begin{equation}\label{equation6}
\lim\limits_{i \rightarrow \infty} \Vert (x_{2m_i},y_{2m_i})-(T(x_{2n_i},y_{2n_i}),T(y_{2n_i},x_{2n_i})) \Vert = dist(A,B) + \epsilon_0.
\end{equation}
Consider
\begin{align*}
\Vert (x_{2m_i},y_{2m_i})-(T(x_{2n_i},y_{2n_i}),T(y_{2n_i},x_{2n_i}))\Vert & \leq \Vert (x_{2m_i},y_{2m_i})-(T(x_{2m_i+1},y_{2m_i+1}),T(y_{2m_i+1},x_{2m_i+1}))\Vert \\
& \quad + \Vert (T(x_{2m_i+1},y_{2m_i+1}),(T(y_{2m_i+1},x_{2m_i+1}))-(T(x_{2n_i+2},\\
& \quad y_{2n_i+2}),T(y_{2n_i+2},x_{2n_i+2})) \Vert + \Vert (T(x_{2n_i+2},y_{2n_i+2}),\\
& \quad T(y_{2n_i+2},x_{2n_i+2}))-(T(x_{2n_i},y_{2n_i}),T(y_{2n_i},x_{2n_i})) \Vert
\end{align*}
Also, observe that 
\begin{align*}
\Vert (T(x_{2m_i+1},&y_{2m_i+1}),(T(y_{2m_i+1},x_{2m_i+1}))-(T(x_{2n_i+2}, y_{2n_i+2}),T(y_{2n_i+2},x_{2n_i+2})) \Vert\\
&= \max \{ \Vert T(x_{2m_i+1},y_{2m_i+1})-T(x_{2n_i+2},y_{2n_i+2}) \Vert, \Vert T(y_{2m_i+1},x_{2m_i+1})-T(y_{2n_i+2},x_{2n_i+2}) \Vert \}\\
& \leq \lambda  \Vert (x_{2m_i+1},y_{2m_i+1})-(x_{2n_i+2},y_{2n_i+2}) \Vert+(1- \lambda)dist(A,B)\\
& \leq \lambda^2 \Vert (x_{2m_i},y_{2m_i})-(x_{2n_i+1},y_{2n_i+1}) \Vert +(1-\lambda^2)dist(A,B).
\end{align*}
Therefore, 
\begin{align*}
dist(A,B)+ \epsilon_0 & \leq \Vert (x_{2m_i},y_{2m_i})-(T(x_{2n_i},y_{2n_i}),T(y_{2n_i},x_{2n_i})) \Vert \\
& \leq \Vert (x_{2m_i},y_{2m_i})- (T(x_{2m_i+1},y_{2m_i+1}),T(y_{2m_i+1},x_{2m_i+1})) \Vert \\
& \quad + \lambda^2 \Vert (x_{2m_i},y_{2m_i})-(x_{2n_i+1},y_{2n_i+1}) \Vert +(1-\lambda^2)dist(A,B)\\
& \quad + \Vert (T(x_{2n_i+2},y_{2n_i+2}), T(y_{2n_i+2},x_{2n_i+2}))-(T(x_{2n_i},y_{2n_i}),T(y_{2n_i},x_{2n_i})) \Vert.
\end{align*}
Using (\ref{equation4}), (\ref{equation5}) and (\ref{equation6}) we have $dist(A,B)+ \epsilon_0 \leq dist(A,B)+ \lambda^2 \epsilon$ which implies that $\lambda^2 \geq 1$, a contradiction. Therefore, for every $\epsilon >0$ there exists $N \in \mathbb{N}$ such that for all $m>n \geq N$ we have
$$\Vert (x_{2m},y_{2m})-(T(x_{2n},y_{2n}),T(y_{2n},x_{2n})) \Vert \leq dist(A,B)+ \epsilon.$$
Then using (\ref{equation2}) and Lemma \ref{lemma1}, for every $\epsilon>0$ there exists $N_0 \in \mathbb{N}$ such that for all $m>n \geq N_0$ we have
$$\Vert (x_{2m},y_{2m})-(x_{2n},y_{2n}) \Vert \leq \epsilon.$$
Therefore, $\{(x_{2n},y_{2n})\}$ is a Cauchy sequence and hence convergent. By Proposition \ref{proposition2} there exists $(x,y) \in A \times B$ such that
\begin{align*}
\Vert x-T(x,y) \Vert & =dist(A,B),\\
\Vert y-T(y,x) \Vert &=dist(A,B).
\end{align*}
Therefore, 
\begin{equation}\label{equation10}
\Vert (x,y)-(T(x,y),T(y,x))\Vert = \max \{ \Vert x-T(x,y) \Vert, \Vert y-T(y,x) \Vert \}=dist(A,B).
\end{equation}
Let $(\overline{x},\overline{y})$ be another coupled best proximity point of $T$. Consider 
\begin{align*}
\Vert T(T(x,y),T(y,x))-T(x,y) \Vert & \leq \lambda \Vert (T(x,y), T(y,x))-(x,y) \Vert +(1-\lambda)dist(A,B)\\
& =dist(A,B).
\end{align*}
Similarly, $\Vert T(T(y,x),T(x,y))-T(y,x) \Vert=dist(A,B)$. This implies that
\begin{align*}
\Vert (T(T(x,y),T(y,x)),T(T(y,x),T(x,y)))-(T(x,y),T(y,x))\Vert &= \max \{ \Vert T(T(x,y),T(y,x))-T(x,y) \Vert, \\
& \quad \Vert T(T(y,x),T(x,y))-T(y,x))\Vert \}\\
&= dist(A,B).
\end{align*}
Using (\ref{equation10}) and Lemma \ref{lemma2} we get $\Vert (x,y)-(T(T(x,y),T(y,x)),T(T(y,x),T(x,y))) \Vert =0$. Therefore, 
$$T(T(x,y),T(y,x))=x \quad \mbox{and} \quad T(T(y,x),T(x,y))=y.$$
Similarly,
$$T(T(\overline{x},\overline{y}),T(\overline{y},\overline{x}))=\overline{x} \quad \mbox{and} \quad T(T(\overline{y},\overline{x}),T(\overline{x},\overline{y}))=\overline{y}.$$
Consider 
\begin{align*}
\Vert T(x,y)-\overline{x} \Vert &=\Vert T(x,y)-T(T(\overline{x},\overline{y}),T(\overline{y},\overline{x}))\Vert \\
& \leq \lambda  \Vert (x,y)-(T(\overline{x},\overline{y}),T(\overline{y},\overline{x}) )\Vert +(1- \lambda)dist(A,B)\\
& \leq \lambda \Vert (x,y)-(T(\overline{x},\overline{y}),T(\overline{y},\overline{x}) )\Vert+(1-\lambda) \max \{ \Vert x-T(\overline{x},\overline{y}) \Vert, \Vert y-T(\overline{y}, \overline{x})\Vert \}\\
&= \Vert (x,y)-(T(\overline{x},\overline{y}),T(\overline{y},\overline{x}) )\Vert.
\end{align*}
Similarly,
$$\Vert T(y,x)- \overline{y} \Vert \leq \Vert (x,y)-(T(\overline{x},\overline{y}),T(\overline{y},\overline{x}) )\Vert.$$
This gives
\begin{equation}\label{equation7}
\Vert (T(x,y),T(y,x))- (\overline{x},\overline{y}) \Vert \leq \Vert (x,y)-(T(\overline{x},\overline{y}),T(\overline{y},\overline{x})) \Vert.
\end{equation}
Similarly,
\begin{equation}\label{equation8}
\Vert (T(\overline{x},\overline{y}),T(\overline{y},\overline{x}))-(x,y) \Vert \leq \Vert (\overline{x},\overline{y}),(T(x,y),T(y,x))\Vert.
\end{equation}
From (\ref{equation7}) and (\ref{equation8}) we conclude that
\begin{equation}\label{equation9}
\Vert (x,y)-(T(\overline{x},\overline{y}),T(\overline{y},\overline{x})) \Vert = \Vert (\overline{x},\overline{y})-(T(x,y),T(y,x))\Vert.
\end{equation}
Observe that $\Vert (x,y)-(T(\overline{x},\overline{y}),T(\overline{y},\overline{x}))\Vert >dist(A,B)$ because if $\Vert (x,y)-(T(\overline{x},\overline{y}),T(\overline{y},\overline{x}))\Vert =dist(A,B)$, then using (\ref{equation10}), (\ref{equation9}) and Lemma \ref{lemma2} we deduce that $(x,y)=(\overline{x},\overline{y})$. Similarly, $\Vert (\overline{x},\overline{y}) -(T(x,y),T(y,x))\Vert >dist(A,B)$. Consider
\begin{align*}
\Vert (\overline{x},\overline{y})-(T(x,y),T(y,x))\Vert &=\max \{ \Vert \overline{x}-T(x,y) \Vert, \Vert \overline{y}-T(y, x) \Vert \}\\
& = \max \{ \Vert T(T(\overline{x},\overline{y}),T(\overline{y}, \overline{x}))-T(x,y) \Vert, \Vert T(T(\overline{y},\overline{x}),T(\overline{x},\overline{y}))-T(y,x) \Vert \}\\
& \leq \lambda  \Vert (x,y)-(T(\overline{x},\overline{y},T(\overline{y}, \overline{x})) \Vert+(1-\lambda)dist(A,B)\\
&< \lambda \Vert (x,y)-(T(\overline{x}, \overline{y}),T(\overline{y},\overline{x})) \Vert +(1- \lambda ) \Vert (x,y)-(T(\overline{x},\overline{y}),T(\overline{y},\overline{x}))\Vert.
\end{align*}
This implies that $\Vert (\overline{x},\overline{y})-(T(x,y),T(y,x)) \Vert < \Vert (x,y)-(T(\overline{x},\overline{y}),T(\overline{y}, \overline{x})) \Vert$, a contradiction. Therefore, $(x,y)=(\overline{x},\overline{y})$. Hence, $T$ has a unique coupled best proximity point. Let $\epsilon>0$ be given and $(u,v) \in A \times B$ satisfy \begin{align*}
\Vert u-T(u,v )\Vert & \leq \epsilon+dist(A,B),\\
\Vert v-T(v,u) \Vert & \leq \epsilon+dist(A,B).
\end{align*}
Consider 
\begin{align*}
\Vert x-u \Vert & \leq \Vert x-T(x,y) \Vert +\Vert T(x,y)- T(u,v) \Vert + \Vert T(u,v)-u\Vert \\
& \leq dist(A,B)+ \lambda  \Vert (x,y)-(u,v) \Vert+(1-\lambda) dist(A,B)+ \epsilon+dist(A,B)\\
&= \epsilon+(3-\lambda) dist(A,B)+ \lambda \Vert (x,y)-(u,v) \Vert.
\end{align*}
Similarly, 
$$\Vert y-v \Vert \leq \epsilon+(3-\lambda) dist(A,B)+\lambda \Vert (x,y)-(u,v) \Vert.$$
Therefore, $\max \{ \Vert x-u \Vert, \Vert y-v \Vert \} \leq \epsilon+(3-\lambda) dist(A,B)+\lambda \max \{ \Vert x-u \Vert, \Vert y-v \Vert \}$ which implies that
\begin{align*}
\Vert x-u \Vert & \leq \frac{\epsilon}{1-\lambda}+\Big( \frac{3-\lambda}{1-\lambda} \Big) dist(A,B),\\
\Vert y-v \Vert & \leq \frac{\epsilon}{1-\lambda}+ \Big( \frac{3-\lambda}{1-\lambda} \Big) dist(A,B).
\end{align*}
Hence, the coupled best proximity problem is Ulam-Hyers stable.
\end{proof}
\begin{definition}
\emph{Let $A$ and $B$ be two nonempty subsets of a Banach space $X$. A mapping $S:(A \times B) \cup (B \times A) \rightarrow A \cup B$ is a} p-cyclic nonexpansive mapping \emph{if it satisfies the following conditions:}

\emph{(i) $S(A,B) \subset B \quad \mbox{and} \quad S(B,A) \subset A$,}

\emph{(ii) $\Vert S(x_1,y_1)-S(x_2,y_2) \Vert \leq  \Vert (x_1,y_1)-(x_2,y_2) \Vert $.}
\end{definition}
\begin{theorem}\label{theorem2}
Let $A$ and $B$ be two nonempty, closed, bounded and convex subsets of a uniformly convex Banach space $X$. Let $S:(A \times B) \cup (B \times A) \rightarrow A \cup B$ be  a p-cyclic nonexpansive mapping. Then there exist a sequence $\{(x_n,y_n) \}$ in $A_0 \times B_0$ and $(x^*,y^*) \in A_0 \times B_0$ satisfying the following conditions:

(i) $(x_n,y_n) \rightharpoonup (x^*,y^*)$, where $\rightharpoonup$ denotes weak convergence,

(ii) $\Vert (x^*,y^*)-(S(x^*,y^*),S(y^*,x^*)) \Vert \leq dist(A,B)+ \liminf\limits_{n \rightarrow \infty} \Vert (x_n,y_n)-(x^*,y^*) \Vert$.
\end{theorem}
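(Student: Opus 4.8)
The plan is to approximate the p-cyclic nonexpansive map $S$ by a family of p-cyclic contractions, solve each via Theorem \ref{theorem1}, and pass to a weak limit. First I would use Lemma \ref{lemma3} to fix a pair $(p,q) \in A_0 \times B_0$ with $\Vert p-q \Vert = dist(A,B)$. For $n \geq 2$ put $k_n = 1/n$ and define $S_n:(A \times B) \cup (B \times A) \to A \cup B$ by $S_n(x,y) = (1-k_n)S(x,y)+k_n q$ for $(x,y) \in A \times B$ and $S_n(x,y) = (1-k_n)S(x,y)+k_n p$ for $(x,y) \in B \times A$. Convexity of $A$ and $B$ gives $S_n(A,B) \subset B$ and $S_n(B,A) \subset A$. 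To check that $S_n$ is a p-cyclic contraction with constant $\lambda_n = 1-k_n$, I would split into two cases: if both arguments lie in the same component the $k_n$ translation terms cancel and nonexpansiveness gives the bound directly, while if they lie in different components the leftover term is exactly $k_n \Vert q-p \Vert = k_n\, dist(A,B)$, matching the $(1-\lambda_n)dist(A,B)$ allowed by the definition. This is where the choice $\Vert p-q \Vert = dist(A,B)$ is indispensable.

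By Theorem \ref{theorem1} each $S_n$ has a unique coupled best proximity point $(x_n,y_n)$, so $\Vert x_n - S_n(x_n,y_n) \Vert = \Vert y_n - S_n(y_n,x_n) \Vert = dist(A,B)$ and hence $(x_n,y_n) \in A_0 \times B_0$. Because $A$ and $B$ are bounded and $X$ is uniformly convex (so reflexive), the four bounded sequences $\{x_n\}$, $\{y_n\}$, $\{S_n(x_n,y_n)\}$, $\{S_n(y_n,x_n)\}$ each have weakly convergent subsequences; passing to one common subsequence I would obtain $x_n \rightharpoonup x^*$, $y_n \rightharpoonup y^*$, $S_n(x_n,y_n) \rightharpoonup b^*$ and $S_n(y_n,x_n) \rightharpoonup c^*$. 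Since closed convex sets are weakly closed, $x^*,c^* \in A$ and $y^*,b^* \in B$, which establishes (i).

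For membership in $A_0 \times B_0$ I would use weak lower semicontinuity of the norm: from $x_n - S_n(x_n,y_n) \rightharpoonup x^* - b^*$ it follows that $\Vert x^* - b^* \Vert \leq \liminf_n \Vert x_n - S_n(x_n,y_n) \Vert = dist(A,B)$, while $x^* \in A$ and $b^* \in B$ force $\Vert x^* - b^* \Vert \geq dist(A,B)$; hence $\Vert x^* - b^* \Vert = dist(A,B)$ and $x^* \in A_0$. The symmetric argument applied to $y^* - c^*$ gives $y^* \in B_0$, so $(x^*,y^*) \in A_0 \times B_0$.

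Finally, for (ii) the efficient route is to apply weak lower semicontinuity directly to $x_n - S(x^*,y^*) \rightharpoonup x^* - S(x^*,y^*)$, obtaining $\Vert x^* - S(x^*,y^*) \Vert \leq \liminf_n \Vert x_n - S(x^*,y^*) \Vert$, and then to estimate $\Vert x_n - S(x^*,y^*) \Vert \leq \Vert x_n - S_n(x_n,y_n) \Vert + \Vert S_n(x_n,y_n)-S(x_n,y_n) \Vert + \Vert S(x_n,y_n)-S(x^*,y^*) \Vert$. Here the first summand equals $dist(A,B)$, the second equals $k_n \Vert q - S(x_n,y_n) \Vert$ and tends to $0$ since $B$ is bounded, and the third is at most $\Vert (x_n,y_n)-(x^*,y^*) \Vert$ by nonexpansiveness. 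Taking $\liminf$ yields $\Vert x^* - S(x^*,y^*) \Vert \leq dist(A,B) + \liminf_n \Vert (x_n,y_n)-(x^*,y^*) \Vert$; the same estimate for $y^*$ together with the identity $\Vert (x^*,y^*)-(S(x^*,y^*),S(y^*,x^*)) \Vert = \max\{ \Vert x^*-S(x^*,y^*) \Vert, \Vert y^*-S(y^*,x^*) \Vert \}$ gives (ii). I expect this last estimate to be the main obstacle: routing the triangle inequality naively through $x_n$ and then through $S(x_n,y_n)$ would double the $\liminf \Vert (x_n,y_n)-(x^*,y^*) \Vert$ term, so the weak lower semicontinuity step has to be taken on $\Vert x_n - S(x^*,y^*) \Vert$ itself to keep its coefficient equal to one.
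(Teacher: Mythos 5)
Your proposal is correct and follows essentially the same route as the paper: approximate $S$ by convex-combination p-cyclic contractions, apply Theorem \ref{theorem1} to obtain coupled best proximity points $(x_n,y_n)\in A_0\times B_0$, extract a weakly convergent subsequence via reflexivity, and deduce (ii) from weak lower semicontinuity of the norm together with the three-term triangle inequality and nonexpansiveness. The only cosmetic differences are that the paper anchors its contractions $T_n$ at $S(x_0,y_0)$ and $S(y_0,x_0)$ rather than at the points $q,p$ themselves, and it places the weak limit in $A_0\times B_0$ by weak closedness of that closed, convex, bounded set, whereas you prove membership directly from weak limits of the image sequences.
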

\begin{proof}
Since every uniformly convex Banach space is reflexive, by Lemma \ref{lemma3} we infer that $A_0$ is nonempty. Then there exists $(x_0,y_0) \in A_0 \times B_0$ such that $\Vert x_0-y_0 \Vert =dist(A,B)$. For each $n \in \mathbb{N}$, define $T_n:(A \times B) \cup (B \times A) \rightarrow A \cup B$ by
$$T_n(x,y):=\left\{\begin{array}{cc}
\frac{1}{n}S(x_0,y_0)+ \Big(1-\frac{1}{n} \Big) S(x,y),& \mbox{if}\thinspace \thinspace (x,y) \in A \times B,\\
\frac{1}{n}S(y_0,x_0)+\Big(1- \frac{1}{n} \big) S(y,x), &\mbox{if} \thinspace \thinspace (x,y) \in B \times A.
\end{array}
\right. $$
Since $A$ and $B$ are convex, $T_n(A,B) \subset B$ and $T_n(B,A) \subset A$ for each $n \in \mathbb{N}$. If $(x_1,y_1),(x_2,y_2) \in A \times B$, then consider 
\begin{align*}
\Vert T_n(x_1,y_1)-T_n(x_2,y_2) \Vert & = \Big(1-\frac{1}{n} \Big) \Vert  S(x_1,y_1)-S(x_2,y_2) \Vert \\
& \leq \Big(1-\frac{1}{n} \Big)  \Vert (x_1,y_1)-(x_2,y_2) \Vert\\
& \leq \Big(1-\frac{1}{n} \Big) \Vert (x_1,y_1)-(x_2,y_2) \Vert + \frac{1}{n} dist(A,B).
\end{align*}
If $(x_1,y_1),(x_2,y_2) \in B \times A$, then we proceed as in the previous case. If $(x_1,y_1) \in A \times B$ and $(x_2,y_2) \in B \times A$, then consider
\begin{align*}
\Vert T_n(x_1,y_1)-T_n(x_2,y_2) \Vert & \leq \frac{1}{n} \Vert S(x_0,y_0)-S(y_0,x_0) \Vert + \Big( 1- \frac{1}{n} \Big) \Vert S(x_1,y_1)-S(x_2,y_2) \Vert \\
& \leq \frac{1}{n} \Vert x_0-y_0 \Vert + \Big( 1- \frac{1}{n} \Big) \Vert (x_1,y_1)-(x_2,y_2) \Vert\\
&=\frac{1}{n} dist(A,B) + \Big( 1- \frac{1}{n} \Big) \Vert (x_1,y_1)-(x_2,y_2) \Vert .
\end{align*}
This implies that $T_n$ is a p-cyclic contraction mapping. Therefore, by Theorem \ref{theorem1} there exists unique $(x_n,y_n) \in A \times B$ such that
\begin{align*}
\Vert x_n-T_n(x_n,y_n) \Vert& = dist(A,B),\\
\Vert y_n-T_n(y_n,x_n) \Vert &=dist(A,B).
\end{align*}
As $T_n(x_n,y_n) \in B$ and $T_n(y_n,x_n) \in A$, $(x_n,y_n) \in A_0 \times B_0$. Since $A_0 \times B_0$ is bounded, closed and convex, $\{(x_n,y_n) \}$ has a weakly convergent subsequence. Assume that the sequence $\{(x_n,y_n)\}$ itself converges weakly to $(x^*,y^*) \in A_0 \times B_0$. Therefore, 
$$(x_n,y_n)-(S(x^*,y^*),S(y^*,x^*)) \rightharpoonup (x^*,y^*)-(S(x^*,y^*),S(y^*,x^*)).$$
As $\Vert . \Vert$ is weakly lower semi-continuous,
\begin{align*}
\Vert (x^*,y^*)-(S(x^*,y^*),S(y^*,x^*)) \vert & \leq \liminf\limits_{n \rightarrow \infty} \Vert (x_n,y_n)-(S(x^*,y^*),S(y^*,x^*)) \Vert \\
& \leq \liminf\limits_{n \rightarrow \infty} \{ \Vert (x_n,y_n)-(T_n(x_n,y_n),T_n(y_n,x_n)) \Vert  \\
& \quad +\Vert (T_n(x_n,y_n),T_n(y_n,x_n))-(S(x_n,y_n),S(y_n,x_n)) \Vert\\
& \quad  + \Vert S(x_n,y_n),S(y_n,x_n))-(S(x^*,y^*),S(y^*,x^*)) \Vert \}
\end{align*}
Since $(x_n,y_n)$ is a coupled best proximity point of $T_n$, 
$$\Vert(x_n,y_n)-(T_n(x_n,y_n),T_n(y_n,x_n)) \Vert = \max \{ \Vert x_n-T_n(x_n,y_n) \Vert, \Vert y_n-T_n(y_n,x_n) \Vert \}=dist(A,B).$$
Consider 
\begin{align*}
\Vert T_n(x_n,y_n)-S(x_n,y_n) \Vert & = \Big\Vert \frac{1}{n} S(x_0,y_0)+ \Big(1- \frac{1}{n} \Big) S(x_n,y_n)-S(x_n,y_n) \Big\Vert \\
& = \frac{1}{n} \Vert S(x_0,y_0)-S(x_n,y_n) \Vert \\
& \leq \frac{1}{n} \Vert (x_0,y_0)-(x_n,y_n) \Vert.
\end{align*}
Similarly, 
\begin{align*}
\Vert T_n(y_n,x_n)-S(y_n,x_n) \Vert & \leq \frac{1}{n} \Vert (x_0,y_0)-(x_n,y_n) \Vert.
\end{align*}
This gives 
\begin{align*}
\Vert (T_n(x_n,y_n),T_n(y_n,x_n))-(S(x_n,y_n),S(y_n,x_n))\Vert &= \max \{ \Vert T_n(x_n,y_n)-S(x_n,y_n) \Vert, \Vert T_n(y_n,x_n)-S(y_n,x_n) \Vert \}\\
& \leq \frac{1}{n} \max \{\Vert x_0-x_n \Vert, \Vert y_0-y_n \Vert \} \rightarrow 0 \thinspace \thinspace \mbox{as} \thinspace \thinspace n \rightarrow \infty.
\end{align*}
Since $S$ is p-cyclic nonexpansive mapping, 
\begin{align*}
\Vert (S(x_n,y_n),S(y_n,x_n))-(S(x^*,y^*),S(y^*,x^*)) \Vert &= \max \{ \Vert S(x_n,y_n)-S(x^*,y^*) \Vert, \Vert S(y_n,x_n)-S(y^*,x^*) \Vert \} \\
& \leq  \Vert (x_n,y_n)-(x^*,y^*) \Vert.
\end{align*}
Therefore, 
\begin{align*}
\Vert (x^*,y^*)-(S(x^*,y^*),S(y^*,x^*)) \Vert & \leq \liminf\limits_{n \rightarrow \infty} \Big\{ dist(A,B)+ \frac{1}{n} \Vert (x_0,y_0)-(x_n,y_n) \Vert\\
& \quad + \Vert (x_n,y_n)-(x^*,y^*) \Vert \Big\}.
\end{align*}
Hence, $\Vert (x^*,y^*)-(S(x^*,y^*),S(y^*,x^*)) \Vert \leq dist(A,B)+ \liminf\limits_{n \rightarrow \infty} \Vert (x_n,y_n)-(x^*,y^*) \Vert $.
\end{proof}
\begin{theorem}\label{theorem3}
Let $A$ and $B$ be two nonempty, closed, bounded and convex subsets of a uniformly convex Banach space $X$ such that $A_0 \times B_0$ is compact. Let $S:(A \times B) \cup (B \times A) \rightarrow A \cup B$ be a p-cyclic nonexpansive mapping. Then $S$ has a coupled best proximity point. Moreover, the coupled  best proximity point problem is Ulam-Hyers stable.
\end{theorem}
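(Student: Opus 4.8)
The plan is to combine Theorem \ref{theorem2} with the compactness hypothesis, which is the only feature separating Theorem \ref{theorem3} from Theorem \ref{theorem2}. Theorem \ref{theorem2} already supplies a sequence $\{(x_n,y_n)\}$ in $A_0 \times B_0$ and a point $(x^*,y^*) \in A_0 \times B_0$ with $(x_n,y_n) \rightharpoonup (x^*,y^*)$ and
$$\Vert (x^*,y^*)-(S(x^*,y^*),S(y^*,x^*)) \Vert \leq dist(A,B)+ \liminf_{n \rightarrow \infty} \Vert (x_n,y_n)-(x^*,y^*) \Vert.$$
I would use compactness of $A_0 \times B_0$ precisely to force the $\liminf$ term to vanish, collapsing the inequality to $\Vert (x^*,y^*)-(S(x^*,y^*),S(y^*,x^*)) \Vert \leq dist(A,B)$.

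First I would argue that compactness upgrades the weak convergence to norm convergence. Given any subsequence of $\{(x_n,y_n)\}$, compactness yields a further subsequence converging in norm to some point, which must coincide with the weak limit $(x^*,y^*)$ by uniqueness of weak limits. Since every subsequence thereby admits a further subsequence converging in norm to the same point $(x^*,y^*)$, the whole sequence converges in norm to $(x^*,y^*)$, so $\liminf_{n \rightarrow \infty} \Vert (x_n,y_n)-(x^*,y^*) \Vert = 0$ and $\Vert (x^*,y^*)-(S(x^*,y^*),S(y^*,x^*)) \Vert \leq dist(A,B)$. For the reverse inequality, note that $x^* \in A$ and $S(x^*,y^*) \in B$, so $\Vert x^*-S(x^*,y^*) \Vert \geq dist(A,B)$, and likewise $\Vert y^*-S(y^*,x^*) \Vert \geq dist(A,B)$; taking the maximum gives $\Vert (x^*,y^*)-(S(x^*,y^*),S(y^*,x^*)) \Vert \geq dist(A,B)$. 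Equality of the maximum with $dist(A,B)$, combined with each coordinate term being at least $dist(A,B)$, forces $\Vert x^*-S(x^*,y^*) \Vert = dist(A,B)$ and $\Vert y^*-S(y^*,x^*) \Vert = dist(A,B)$, i.e. $(x^*,y^*)$ is a coupled best proximity point of $S$.

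For Ulam--Hyers stability I would exploit boundedness rather than a contraction factor, since $S$ is only nonexpansive and the argument of Theorem \ref{theorem1} (which divided by $1-\lambda$) is unavailable. Because $A$ and $B$ are bounded, the quantities $M_A := \sup\{\Vert a-a' \Vert : a,a' \in A\}$ and $M_B := \sup\{\Vert b-b' \Vert : b,b' \in B\}$ are finite; set $M := \max\{M_A,M_B\}$. For any $\epsilon>0$ and any $(u,v) \in A \times B$ satisfying the two approximate inequalities, the best proximity point $(x^*,y^*) \in A \times B$ obtained above gives $\Vert x^*-u \Vert \leq M_A \leq M$ and $\Vert y^*-v \Vert \leq M_B \leq M$. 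Taking $\alpha = 1$ and $\beta = M/dist(A,B)$ (constants depending only on $A$ and $B$) yields $\Vert x^*-u \Vert \leq M = \beta\, dist(A,B) \leq \alpha \epsilon + \beta\, dist(A,B)$, and similarly for $\Vert y^*-v \Vert$, which is exactly the required stability estimate.

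I expect the main obstacle to be the first step: passing correctly from the weak convergence supplied by Theorem \ref{theorem2} to the norm convergence needed to kill the $\liminf$, and confirming that the norm limit is forced to be the weak limit $(x^*,y^*)$. Once that is in place the best-proximity identity is a short squeeze, and the stability conclusion is comparatively soft, resting only on boundedness; the estimate is therefore meaningful whenever $dist(A,B)>0$, the degenerate case $dist(A,B)=0$ reducing to a coupled fixed point problem.
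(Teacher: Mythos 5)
Your proposal is correct and follows essentially the same route as the paper: invoke Theorem \ref{theorem2}, use compactness of $A_0 \times B_0$ to upgrade the weak convergence to norm convergence (your subsequence argument just spells out what the paper asserts in one line), kill the $\liminf$ term, and then squeeze each coordinate between $dist(A,B)$ and the max-norm bound to get the coupled best proximity point. Your stability step is the same soft boundedness argument as the paper's, only marginally cleaner: you bound $\Vert x^*-u\Vert$ directly by the diameter of $A$, whereas the paper runs a triangle inequality through $S(u^*,v^*)$ to get $\alpha=1$, $\beta = 1+\frac{M_A+M_B}{dist(A,B)}$; both versions, like yours, implicitly require $dist(A,B)>0$.
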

\begin{proof}
By Theorem \ref{theorem2} there exists a sequence $\{(x_n,y_n)\}$ in $A_0 \times B_0$ and $(x^*,y^*) \in A_0 \times B_0$ such that the following holds:

(i) $(x_n,y_n) \rightharpoonup (x^*,y^*)$,

(ii) $\Vert (x^*,y^*)-(S(x^*,y^*),S(y^*,x^*)) \Vert \leq dist(A,B)+ \liminf\limits_{n \rightarrow \infty} \Vert (x_n,y_n)-(x^*,y^*) \Vert$.\\
Since $A_0 \times B_0$ is compact, weak convergence implies strong convergence. Therefore, $\lim\limits_{n \rightarrow \infty} \Vert (x_n,y_n)-(x^*,y^*) \Vert=0$ which gives 
$$\Vert (x^*,y^*)-(S(x^*,y^*),S(y^*,x^*))\Vert \leq dist(A,B).$$
Consider $$dist(A,B) \leq \Vert x^*-S(x^*,y^*) \Vert \leq \Vert (x^*,y^*)-(S(x^*,y^*),S(y^*,x^*))\Vert \leq dist(A,B).$$ Therefore, $\Vert x^*-S(x^*,y^*) \Vert=dist(A,B)$. Similarly, $\Vert y^*-S(y^*,x^*) \Vert =dist(A,B)$. Since $A$ and $B$ are bounded, there exist $M_A,M_B>0$ such that $\Vert x \Vert \leq M_A$ for all $x \in A$ and $\Vert y \Vert \leq M_B$ for all $y \in B$. Let $\epsilon>0$ be given and $(u^*,v^*) \in A \times B$ satisfy
\begin{align*}
\Vert u^*-S(u^*,v^*) \Vert & \leq \epsilon+dist(A,B),\\
\Vert v^*-S(v^*,u^*) \Vert & \leq \epsilon+dist(A,B).
\end{align*} 
Consider 
\begin{align*}
\Vert x^*-u^* \Vert & \leq \Vert x^* \Vert + \Vert u^*-S(u^*,v^*) \Vert + \Vert S(u^*,v^*) \Vert \\
& \leq M_A+\epsilon+dist(A,B)+M_B\\
&= \epsilon+dist(A,B) \Big(1+\frac{M_A+M_B}{dist(A,B)} \Big).
\end{align*}
Similarly, 
$$ \Vert y^*-v^* \Vert \leq \epsilon+dist(A,B) \Big(1+\frac{M_A+M_B}{dist(A,B)} \Big)$$
which implies that the coupled best proximity point problem is Ulam-Hyers stable.  
\end{proof}
It is observed that the  coupled best proximity point obtained in Theorem \ref{theorem3} is not necessarily unique. This can be illustrated by the following example: 
\begin{example}
\emph{Let $X=\mathbb{R}^2$ and define $\Vert (x,y) \Vert= \sqrt{x^2+y^2}$. Then $X$ is a uniformly convex Banach space. Let $A=[0,1] \times [0,1]$ and $B=[0,1] \times [2,3]$. Then $A$ and $B$ are nonempty, closed, bounded and convex subsets of $X$. Also, $dist(A,B)=1$ and
\begin{align*}
A_0&=\{ (x,1): 0 \leq x \leq 1 \}\\
B_0&=\{ (x,2):0 \leq x \leq  1 \}.
\end{align*}
Since $A_0$ and $B_0$ are compact, $A_0 \times B_0$ is compact. Define $S:(A \times B) \cup (B \times A) \rightarrow A \cup B$ by
$$S((x,y),(u,v)):=(\sin u,v).$$ It is easily seen that $S(A,B) \subset B$ and $S(B,A) \subset A$. Consider
\begin{align*}
\Vert S((x_1,y_1),(u_1,v_1))-S((x_2,y_2),(u_2,v_2)) \Vert &= \Vert (\sin u_1,v_1)-(\sin u_2,v_2) \Vert \\
& = \sqrt{(\sin u_1-\sin u_2)^2+(v_1-v_2)^2}\\
& \leq \sqrt{(u_1-u_2)^2+(v_1-v_2)^2}\\
& \leq \max \{ \sqrt{(x_1-x_2)^2+(y_1-y_2)^2},\\
& \quad \sqrt{(u_1-u_2)^2+(v_1-v_2)^2}\}\\
&= \max \{ \Vert (x_1-x_2,y_1-y_2) \Vert, \Vert (u_1-u_2,v_1-v_2) \Vert \}\\
&= \max \{ \Vert (x_1,y_1)-(x_2,y_2) \Vert, \Vert (u_1,v_1)-(u_2,v_2) \Vert \}.
\end{align*} 
This implies that $S$ is a p-cyclic nonexpansive mapping. Therefore, all the conditions of Theorem \ref{theorem3} are satisfied which yields the existence of a coupled best proximity point of $S$. We observe that
\begin{align*}
\Vert (0,1)-S((0,1),(0,2)) \Vert&= \Vert (0,1)-(0,2) \Vert=1=dist(A,B),\\
\Vert (0,2)-S((0,2),(0,1)) \Vert &= \Vert (0,2)-(0,1) \Vert=1=dist(A,B).
\end{align*}
Therefore, $((0,1),(0,2))$ is a coupled best proximity point of $S$. Also, if we consider $x \in [0,1]$ such that $\sin x=x$, then  $((x,1),(x,2))$ is a coupled  best proximity point of $S$. }
\end{example}
The following result establishes the existence of a coupled  best proximity point and Ulam-Hyers stability in strictly convex Banach spaces.
\begin{theorem}\label{theorem4}
Let $A$ and $B$ be two nonempty, closed and convex subsets of a strictly convex Banach space $X$. Suppose that $A_0$ and $B_0$ are nonempty and $A_0 \times B_0$ is compact. Let $S:(A \times B) \cup (B \times A) \rightarrow A \cup B$ be a p-cyclic nonexpansive mapping. Then $S$ has a coupled  best proximity point. Additionally, assume that if $(x,y) \in A \times B$ is a coupled best proximity point  of $S$, then for every $(u,v) \in A \times B$ we have
$$\Vert x-T(u,v) \Vert\leq \Vert u-T(u,v) \Vert \quad \mbox{and} \quad \Vert y-T(v,u) \Vert \leq \Vert v-T(v,u) \Vert.$$ Then the coupled  best proximity point problem is Ulam-Hyers stable.
\end{theorem}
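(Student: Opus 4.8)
The plan is to treat the two assertions separately, since the uniform-convexity toolkit used for Theorems \ref{theorem1}--\ref{theorem3} (in particular Theorem \ref{theorem1} and Lemmas \ref{lemma1}, \ref{lemma2}) is no longer available in a merely strictly convex space. For existence I would therefore abandon the approximating p-cyclic contractions $T_n$ and instead produce a coupled best proximity point as a fixed point, via Schauder's fixed point theorem on the compact convex set $A_0 \times B_0$, using strict convexity only to render the relevant proximal projections single-valued. For the Ulam--Hyers stability I would exploit the extra hypothesis directly together with the triangle inequality.

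\emph{Existence.} First I would record that $A_0$ and $B_0$ are convex: if $a_1,a_2 \in A_0$ have proximal partners $b_1,b_2 \in B_0$, then for $t \in [0,1]$ the point $tb_1+(1-t)b_2 \in B$ witnesses $ta_1+(1-t)a_2 \in A_0$ by convexity of the norm; hence $A_0 \times B_0$ is compact and convex. Next I would establish the key self-map property: for $(x,y) \in A_0 \times B_0$ one has $S(x,y) \in B_0$ and $S(y,x) \in A_0$. Indeed, letting $x' \in B$ and $y' \in A$ be proximal points of $x$ and $y$, the pair $(x',y') \in B \times A$ satisfies $\Vert (x,y)-(x',y') \Vert = dist(A,B)$, so p-cyclic nonexpansiveness forces $\Vert S(x,y)-S(x',y') \Vert \le dist(A,B)$; since $S(x,y) \in B$ and $S(x',y') \in A$ the reverse inequality is automatic, giving $S(x,y) \in B_0$, and symmetrically $S(y,x) \in A_0$. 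Strict convexity then guarantees that each point of $B_0$ (resp. $A_0$) has a unique proximal point in $A$ (resp. $B$), so the projections $P_A \colon B_0 \to A_0$ and $P_B \colon A_0 \to B_0$ are well defined, and they are continuous because $A_0,B_0$ are compact and the proximal point is unique (any convergent subsequence of images must converge to the unique proximal point). Finally I would set
\[
H(x,y) := \big(P_A(S(x,y)),\, P_B(S(y,x))\big),
\]
a continuous self-map of the compact convex set $A_0 \times B_0$; Schauder's fixed point theorem yields $(x^*,y^*)$ with $x^* = P_A(S(x^*,y^*))$ and $y^* = P_B(S(y^*,x^*))$, which says precisely that $\Vert x^*-S(x^*,y^*) \Vert = dist(A,B)$ and $\Vert y^*-S(y^*,x^*) \Vert = dist(A,B)$, i.e. $(x^*,y^*)$ is a coupled best proximity point.

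\emph{Ulam--Hyers stability.} Given $\epsilon>0$ and $(u,v) \in A \times B$ with $\Vert u-S(u,v) \Vert \le \epsilon+dist(A,B)$ and $\Vert v-S(v,u) \Vert \le \epsilon+dist(A,B)$, I would apply the additional hypothesis to the coupled best proximity point $(x^*,y^*)$ obtained above, which gives $\Vert x^*-S(u,v) \Vert \le \Vert u-S(u,v) \Vert \le \epsilon+dist(A,B)$. The triangle inequality then yields $\Vert x^*-u \Vert \le \Vert x^*-S(u,v) \Vert + \Vert S(u,v)-u \Vert \le 2\epsilon + 2\,dist(A,B)$, and symmetrically $\Vert y^*-v \Vert \le 2\epsilon+2\,dist(A,B)$. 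Thus the constants $\alpha=\beta=2$ work and the problem is Ulam--Hyers stable.

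The main obstacle is the existence half, since the convenient uniform-convexity arguments are gone; concretely, the delicate points are verifying the self-map property $S(A_0\times B_0) \subset B_0$, $S(B_0 \times A_0) \subset A_0$ and establishing continuity of the metric projections $P_A,P_B$ without reflexivity — the latter being rescued precisely by the compactness of $A_0 \times B_0$ together with the uniqueness coming from strict convexity. Once $H$ is seen to be a continuous self-map of a compact convex set, Schauder closes the existence argument, and the stability conclusion is then immediate from the hypothesis.
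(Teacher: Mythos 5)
Your proposal is correct, but its existence half takes a genuinely different route from the paper's. The paper reuses its own approximation machinery: it forms the p-cyclic contractions $T_n(x,y)=\frac{1}{n}S(x_0,y_0)+\left(1-\frac{1}{n}\right)S(x,y)$ exactly as in Theorem \ref{theorem2}, obtains a coupled best proximity point $(x_n,y_n)$ of each $T_n$ from Proposition \ref{proposition2} (compactness of $A_0\times B_0$ substituting for the Cauchy-sequence argument of Theorem \ref{theorem1}, whose supporting Lemmas \ref{lemma1} and \ref{lemma2} need uniform convexity), then extracts a convergent subsequence $(x_{n_i},y_{n_i})\rightarrow (x^*,y^*)$ and passes to the limit in the same three-term estimate as in Theorem \ref{theorem2}. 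You instead run a one-shot topological argument: convexity of $A_0$ and $B_0$, the self-map property $S(A_0\times B_0)\subset B_0$ and $S(B_0\times A_0)\subset A_0$, single-valuedness of the proximal projections $P_A,P_B$ from strict convexity, their continuity from compactness plus uniqueness, and Schauder's theorem applied to $H(x,y)=\bigl(P_A(S(x,y)),P_B(S(y,x))\bigr)$ on the compact convex set $A_0\times B_0$ (you should add the one-line remark that $S$ is continuous on each of $A\times B$ and $B\times A$, immediate from nonexpansiveness, so that $H$ is continuous). Your route buys two things: it shows exactly where strict convexity enters, which the paper's existence argument never visibly does; and your self-map lemma is precisely the fact the paper needs but never proves --- its appeal to Proposition \ref{proposition2} presupposes that the even iterates of $T_n$ have a convergent subsequence, which is justified because iterates started in $A_0\times B_0$ remain in $A_0\times B_0$, and that containment is exactly your $S(A_0\times B_0)\subset B_0$, $S(B_0\times A_0)\subset A_0$ computation combined with convexity of $A_0$ and $B_0$. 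What the paper's route buys is independence from any external fixed point theorem: granting the self-map property, it needs only Proposition \ref{proposition2}, and in fact it never uses strict convexity, which suggests the existence claim survives without that hypothesis. The stability halves are essentially identical: both apply the additional hypothesis to the coupled best proximity point and the triangle inequality to get $\Vert x^*-u\Vert \leq 2\epsilon+2\,dist(A,B)$ and $\Vert y^*-v\Vert \leq 2\epsilon+2\,dist(A,B)$, i.e. $\alpha=\beta=2$.
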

\begin{proof}
Since $A_0$ and $B_0$ are nonempty, suppose that $x_0 \in A_0$ and $y_0 \in B_0$. For each $n \in \mathbb{N}$ define the mapping $T_n$ same as in Theorem \ref{theorem2}. Since $A_0 \times B_0$ is compact, by Proposition \ref{proposition2} there exists $(x_n,y_n) \in A \times B$ such that
\begin{align*}
\Vert x_n-T(x_n,y_n) \Vert &= dist(A,B),\\
\Vert y_n-T(y_n,x_n) \Vert &=dist(A,B)
\end{align*} 
which implies that $(x_n,y_n) \in A_0 \times B_0$. As $A_0 \times B_0$ is compact, $\{(x_n,y_n)\}$ has a subsequence $\{(x_{n_i},y_{n_i})\}$ converging to some $(x^*,y^*) \in A_0 \times B_0$. Consider
\begin{align*}
\Vert (x^*,y^*)-(S(x^*,y^*),S(y^*,x^*))\Vert & \leq \Vert (x^*,y^*)-(x_{n_i},y_{n_i})\Vert+ \Vert (x_{n_i},y_{n_i})-(T_{n_i}(x_{n_i},y_{n_i}),T_{n_i}(y_{n_i},x_{n_i}))\Vert \\
& \quad +\Vert (T_{n_i}(x_{n_i},y_{n_i}),T_{n_i}(y_{n_i},x_{n_i}))-(S(x_{n_i},y_{n_i}),S(y_{n_i},x_{n_i}))\Vert\\
& \quad + \Vert (S(x_{n_i},y_{n_i}),S(y_{n_i},x_{n_i}))-(S(x^*,y^*),S(y^*,x^*)) \Vert.
\end{align*}
Proceeding as in the proof of Theorem \ref{theorem2} we have
\begin{align*}
\Vert (x_{n_i},y_{n_i})-(T_{n_i}(x_{n_i},y_{n_i}),T_{n_i}(y_{n_i},x_{n_i})) \Vert&=dist(A,B),\\
\Vert (T_{n_i}(x_{n_i},y_{n_i}),T_{n_i}(y_{n_i},x_{n_i}))-(S(x_{n_i},y_{n_i}),S(y_{n_i},x_{n_i}))\Vert &\leq \frac{1}{n_i}  \Vert (x_0,y_0)-(x_{n_i},y_{n_i}) \Vert,\\
\Vert (S(x_{n_i},y_{n_i}),S(y_{n_i},x_{n_i}))-(S(x^*,y^*),S(y^*,x^*))\Vert& \leq \Vert (x_{n_i},y_{n_i})-(x^*,y^*) \Vert.
\end{align*}
This gives
\begin{align*}
dist(A,B) \leq \Vert (x^*,y^*)-(S(x^*,y^*),S(y^*,x^*)) \Vert & \leq \Vert (x^*,y^*)-(x_{n_i},y_{n_i}) \Vert+ dist(A,B)\\
& \quad +\frac{1}{n_i} \Vert (x_0,y_0)-(x_{n_i},y_{n_i}) \Vert+\Vert (x_{n_i},y_{n_i})-(x^*,y^*) \Vert.
\end{align*}
Letting $i \rightarrow \infty$ we get $\Vert (x^*,y^*)-(S(x^*,y^*),S(y^*,x^*)) \Vert=dist(A,B)$. This implies that $(x^*,y^*)$ is a coupled  best proximity point of $S$. Let $\epsilon>0$ be given and $(u^*,v^*) \in A \times B$ satisfy
\begin{align*}
\Vert u^*-S(u^*,v^*) \Vert &\leq \epsilon+dist(A,B),\\
\Vert v^*-S(v^*,u^*) \Vert & \leq \epsilon+dist(A,B).
\end{align*}
Consider
\begin{align*}
\Vert x^*-u^* \Vert & \leq \Vert x^*-T(u^*,v^*) \Vert + \Vert T(u^*,v^*)-u^* \Vert\\
& \leq \Vert u^*-T(u^*,v^*) \Vert + \Vert T(u^*,v^*) -u^* \Vert\\
& \leq 2 \epsilon+2dist(A,B).
\end{align*}
Similarly, 
$$\Vert y^*-v^* \Vert \leq 2 \epsilon+2dist(A,B)$$
which implies that the coupled  best proximity point problem is Ulam-Hyers stable. 
\end{proof}
\begin{remark}
\emph{It is also intriguing to investigate the existence of a multidimensional best proximity point and  its Ulam-Hyers stability in the case of finitely many nonempty, closed and convex subsets of a uniformly convex Banach space.}
\end{remark}
\section*{Acknowledgements}
 The $^*$corresponding author is supported by UGC Non-NET fellowship (Ref.No. Sch/139/Non-NET/  Math./Ph.D./2017-18/1028).
 
\textbf{Anuradha Gupta}\\
 Department of Mathematics, Delhi College of Arts and Commerce,\\
  University of Delhi, Netaji Nagar, \\
  New Delhi-110023, India.\\
  \vspace{0.2cm}
 email: dishna2@yahoo.in\\
   \textbf{Manu Rohilla}\\
  Department of Mathematics, University of Delhi, \\
  New Delhi-110007, India.\\
  email: manurohilla25994@gmail.com

\begin{thebibliography}{99} 
 \bibitem{1} A. A. Eldred\ and\ P. Veeramani, Existence and convergence of best proximity points, J. Math. Anal. Appl. {\bf 323} (2006), no.~2, 1001--1006.
 
 \bibitem{2} R. Esp\'{\i}nola\ and\ A. Fern\'{a}ndez-Le\'{o}n, On best proximity points in metric and Banach spaces, Canad. J. Math. {\bf 63} (2011), no.~3, 533--550.
 
 \bibitem{3} M. Gabeleh\ and\ C. Vetro, A note on best proximity point theory using proximal contractions, J. Fixed Point Theory Appl. {\bf 20} (2018), no.~4, Art. 149, 11 pp.
 
 \bibitem{12} D. J. Guo\ and\ V. Lakshmikantham, Coupled fixed points of nonlinear operators with applications, Nonlinear Anal. {\bf 11} (1987), no.~5, 623--632.
 
 \bibitem{4} A. Gupta, Ulam-Hyers stability theorem by tripled fixed point theorem, Fasc. Math. No. 56 (2016), 77--97.
 
  \bibitem{5} W. A. Kirk, S. Reich\ and\ P. Veeramani, Proximinal retracts and best proximity pair theorems, Numer. Funct. Anal. Optim. {\bf 24} (2003), no.~7-8, 851--862.
 
  \bibitem{6} J. T. Markin, A best approximation theorem for nonexpansive set-valued mappings in hyperconvex metric spaces, Rocky Mountain J. Math. {\bf 35} (2005), no.~6, 2059--2063.
 
  \bibitem{7} O. Mle\c{s}ni\c{t}e, Existence and Ulam-Hyers stability results for coincidence problems, J. Nonlinear Sci. Appl. {\bf 6} (2013), no.~2, 108--116.
  
   \bibitem{8} O. Mle\c{s}ni\c{t}e\ and\ A. Petru\c{s}el, Existence and Ulam-Hyers stability results for multivalued coincidence problems, Filomat {\bf 26} (2012), no.~5, 965--976.
   
    \bibitem{9} V. Sankar Raj\ and\ P. Veeramani, Best proximity pair theorems for relatively nonexpansive mappings, Appl. Gen. Topol. {\bf 10} (2009), no.~1, 21--28.
 
 \bibitem{10} I. A. Rus, Remarks on Ulam stability of the operatorial equations, Fixed Point Theory {\bf 10} (2009), no.~2, 305--320.
 
 \bibitem{11} M. A. Al-Thagafi\ and\ N. Shahzad, Convergence and existence results for best proximity points, Nonlinear Anal. {\bf 70} (2009), no.~10, 3665--3671.
 
 \bibitem{13} J. Zhang, R. P. Agarwal\ and\ N. Jiang, $N$ fixed point theorems and $N$ best proximity point theorems for generalized contraction in partially ordered metric spaces, J. Fixed Point Theory Appl. {\bf 20} (2018), no.~1, Art. 18, 29 pp.

 \end{thebibliography}
\end{document}